\documentclass[12pt]{article}
\usepackage[english]{babel}

\usepackage{upref,amsfonts,amsxtra,a4wide,latexsym,enumerate}
\usepackage{amssymb,a4wide,epsf,mathrsfs}
\usepackage{amsthm}
\usepackage{amsmath}
\usepackage{lastpage}
\usepackage[all]{xy}
\usepackage{cleveref}
\usepackage{pdfpages}
\usepackage{enumitem}
\usepackage{graphicx}
\usepackage{float}

\usepackage{tikz}

\newcommand{\Z}{{\mathbb Z}}

\renewcommand{\phi}{{\varphi}}

\theoremstyle{plain}
\newtheorem{thm}{Theorem}
\newtheorem{prop}{Proposition}
\newtheorem{lemma}{Lemma}
\newtheorem{cor}{Corollary}
\theoremstyle{definition}
\newtheorem{defi}{Definition}
\newtheorem{obs}{Observation}

\newtheorem{claim}{Claim}
\newtheorem{case}{Case}

\crefname{defi}{Definition}{Definitions}
\Crefname{defi}{Definition}{Definitions}
\crefname{thm}{Theorem}{Theorems}
\Crefname{thm}{Theorem}{Theorems}
\crefname{lemma}{Lemma}{Lemmas}
\Crefname{lemma}{Lemma}{Lemmas}
\crefname{cor}{Corollary}{Corollaries}
\Crefname{cor}{Corollary}{Corollaries}
\crefname{prop}{Proposition}{Propositions}
\Crefname{prop}{Proposition}{Propositions}
\crefname{prob}{Problem}{Problems}
\Crefname{prob}{Problem}{Problems}
\crefname{conj}{Conjecture}{Conjectures}
\Crefname{conj}{Conjecture}{Conjectures}

\newenvironment{customthm}[1]
  {\innercustomthm}
  {\endinnercustomthm}


\begin{document}

\title{Exponentially many $\Z_5$-colorings in simple planar graphs}


\author{Rikke Langhede \qquad  Carsten Thomassen\thanks{Supported by Independent Research Fund Denmark, 8021-002498 AlgoGraph.}\\
\small Department of Applied Mathematics and Computer Science  \\[-0.8ex]
\small Technical University of Denmark  \\[-0.8ex]
\small DK-2800 Lyngby, Denmark  \\
\small\tt rimla@dtu.dk \qquad ctho@dtu.dk}

\maketitle

\begin{abstract}
Every planar simple graph with $n$ vertices has at least $2^{n/9}$ $\Z_5$-colorings. 
\end{abstract}

\section{Introduction}

List coloring and group coloring are generalizations of (ordinary) graph coloring.
While the two generalizations are formally unrelated, it is believed that group coloring is more difficult than list coloring. 
Specifically, it is conjectured in \cite{kpv} that, for every graph $G$, the list chromatic number is less than or equal to the group chromatic number which is defined as the smallest $k$ such that $G$ is $\Gamma$-colorable (defined below) for every group $\Gamma$ of order at least $k$. 
In \cite{lt} it is conjectured that the list chromatic number is even less than or equal to the weak group chromatic number which is the smallest $k$ such that $G$ is $\Gamma$-colorable for \textbf{some} group $\Gamma$ of order $k$. 
(In \cite{lt} it is proved that the two group chromatic numbers are bounded by each other, but may differ by a factor close to 2.)

Group connectivity and group coloring are introduced by Jaeger et al. in \cite{jlpt}. 
For planar graphs they are dual concepts. 
It was shown in \cite{dlms} that graphs with an edge-connectivity condition imposed have exponentially many group flows for groups of order at least 8. 
\cite{tweak} proved the weak 3-flow conjecture, specifically, every 8-edge-connected graph has a nowhere zero 3-flow. 
In \cite{ltwz} the proof was refined to 6-edge-connected graphs, and in \cite{dms} that refinement was used to prove that every 8-edge-connected graph has exponentially many nowhere-zero 3-flows. 

The groups of order 3, 4, 5 are particularly interesting because they relate to the 4-color theorem and Tutte's flow conjectures.  
Jaeger et al. \cite{jlpt} conjectured that every 3-edge-connected graph is $\Z_5$-connected, which is a strengthening of Tutte's 5-Flow Conjecture.
In \cite{t2} it was proven that planar graphs have exponentially many 5-list-colorings (for every list assignment to the vertices), and in \cite{t5} it was proven that planar graphs of girth at least 5 have exponentially many 3-list-colorings (for every list assignment to the vertices). 
Perhaps somewhat surprising, the list-color proof in \cite{t1} carries over, word for word, to a proof of Theorem \ref{thm:A} below.

\begin{customthm}{A} \label{thm:A}
Let $G$ be a simple planar graph. Then $G$ is $\Z_5$-colorable.
\end{customthm} 

Also, the proof in \cite{t5} needs only minor modifications to give the analogous result for group coloring (saying that planar graphs of girth at least 5 have exponentially many $\Z_3$-colorings). 
However, the proof in \cite{t2} does not immediately extend to group coloring.
In this paper we prove

\begin{customthm}{B} \label{thm:B}
Every planar simple graph with $n$ vertices has at least $2^{n/9}$ $\Z_5$-colorings. 
\end{customthm}

Note, that Theorem \ref{thm:A} proves the conjecture of Jaeger et al. when restricted to planar graphs. Theorem \ref{thm:B} even shows that there are many solutions for this family of graphs.

Although both the bound in Theorem \ref{thm:B} as well as the strategy of proof are identical to those in \cite{t2}, the details are significantly different. 
The main idea in \cite{t2} (as well as in the present paper) is an application of the 5-list-color theorem in \cite{t1}. 
In \cite{t1} two neighboring vertices on the outer cycle are allowed to be precolored. 
In \cite{t2} (and in the present paper) we need the extension where a path with three vertices on the outer cycle is precolored. 
Such a coloring cannot always be extended, but the exceptions (called \textit{generalized wheels} in \cite{t2}) are easily characterized and studied. 
Their nice behaviour allows exponentially many list colorings. 
For group colorings, however, there are more exceptions (which we call \textit{generalized multi-wheels}) and, more important, their group coloring properties are far more subtle. 
Here, the group structure is essential, and the proof does not extend to e.g. DP-colorings \cite{dp}. We do conjecture, though, that the planar graphs have exponentially many DP-colorings. That would be a common generalization of \cite{t2} and the present paper. 

Also, we conjecture that if every graph in a graph family has exponentially many $\Z_k$-colorings, then it has exponentially many $k$-list-colorings.
As mentioned earlier, a graph may be $\Gamma$-colorable and non-$\Gamma'$-colorable for some Abelian groups $\Gamma, \Gamma'$ where $|\Gamma| < |\Gamma'|$.
But maybe the existence of many $\Gamma$-colorings implies some (or even many) $\Gamma'$-colorings.
Jaeger et al. \cite{jlpt} proved that $\Z_5$-colorability does not imply $\Z_6$-colorability.
A planar graph has at least $(k-5)^k$ $\Z_k$-colorings. 
For $\Z_6$ we can do better:
By repeating the present proof we obtain at least $2^{n/9}$ $\Z_6$-colorings.
By using the proof in \cite{t1} we obtain even $(3/2)^n$ $\Z_6$-colorings.

In this paper, we will maintain the same structure as in \cite{t2}. Thus the theorems and lemmas, etc., will be given the same numbers, and those proofs from \cite{t2} which carry over will stand as in \cite{t2}.

\section{Definitions}

In this paper we consider simple planar graphs. We follow the notation of Mohar and Thomassen \cite{mt}.
Each edge in the graph will be given an orientation. The orientation will be fixed, but the specific orientation of a graph will not be important.

We will introduce an additional constraint on the group colorings of graphs by letting $F_v \subseteq \Z_5$ denote a set of \textit{forbidden} colors at the vertex $v$. We thus require a group coloring $c: V(G) \to \Z_5$ to satisfy $c(v) \notin F_v$.
Furthermore, we will use the notation $L_v = \Z_5 \setminus F_v$ to denote the set of \textit{available} colors at $v$.

In general we define group colorability as follows.

\begin{defi}
Let $\Gamma$ be an Abelian group. The graph $G$ is said to be {\em $\Gamma$-colorable} if the following holds: Given some orientation of $G$ and any function $\phi: E(G) \to \Gamma$ there exists a vertex coloring $c: V(G) \to \Gamma$ such that
$c(w) - c(u) \neq \phi(uw)$
for each $uw \in E(G)$ where $uw$ is directed towards $w$.
\end{defi}

If this holds we say that $c$ is \textit{proper} with respect to $\phi$.
If the function $\phi: E(G) \to \Gamma$ is given, we define $(\Gamma, \phi)$-colorability as follows:

\begin{defi}
$G$ is said to be {\em $(\Gamma, \phi)$-colorable} if there exists a vertex coloring $c: V(G) \to \Gamma$ such that
$c(w) - c(u) \neq \phi(uw)$
for each $uw \in E(G)$ where $uw$ is directed towards $w$.
\end{defi}

Note on notation: Formally, $\phi(uw)$ is defined on every directed edge $uw$. But, we also write $\phi(wu) = -\phi(uw)$.

The following property of group coloring will prove useful later.

\begin{prop} \label{prop:newphi}
Let $\phi: E(G) \to \Gamma$. Given $v_0 \in V(G)$ and $\alpha \in \Gamma$, we define $\phi': E(G) \to \Gamma$ as follows:
\begin{equation}
\phi'(e) =
\begin{cases}
\phi(e) + \alpha & \text{if $e$ is incident to $v_0$ and directed towards $v_0$}, \\
\phi(e) - \alpha & \text{if $e$ is incident to $v_0$ and directed away from $v_0$}, \\
\phi(e) & \text{otherwise.} \\
\end{cases}
\end{equation}
Then $G$ is $(\Gamma, \phi)$-colorable if and only if $G$ is $(\Gamma, \phi')$-colorable.
\end{prop}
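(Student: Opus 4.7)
The plan is to exhibit an explicit bijection between $(\Gamma,\phi)$-colorings and $(\Gamma,\phi')$-colorings of $G$. Given a vertex coloring $c:V(G)\to\Gamma$, define a new coloring $c':V(G)\to\Gamma$ by $c'(v_0)=c(v_0)+\alpha$ and $c'(v)=c(v)$ for $v\neq v_0$. This map is clearly an involution (up to replacing $\alpha$ by $-\alpha$), so it is a bijection on the set of all vertex colorings, and it suffices to show that $c$ is proper with respect to $\phi$ if and only if $c'$ is proper with respect to $\phi'$.

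To verify this, I would check the condition $c'(w)-c'(u)\neq \phi'(uw)$ edge by edge, splitting into three cases according to how the edge $uw$ relates to $v_0$. First, if $uw$ is not incident to $v_0$, then $c'(w)-c'(u)=c(w)-c(u)$ and $\phi'(uw)=\phi(uw)$, so the condition at $uw$ for $(c',\phi')$ is literally the same as the condition at $uw$ for $(c,\phi)$. Second, if $uw$ is directed towards $v_0$, so $w=v_0$, then $c'(w)-c'(u)=c(v_0)+\alpha-c(u)$ while $\phi'(uw)=\phi(uw)+\alpha$; subtracting $\alpha$ from both sides shows these are unequal exactly when $c(v_0)-c(u)\neq\phi(uw)$. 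Third, if $uw$ is directed away from $v_0$, so $u=v_0$, then $c'(w)-c'(u)=c(w)-c(v_0)-\alpha$ while $\phi'(uw)=\phi(uw)-\alpha$; again, after cancelling the $-\alpha$, inequality for $(c',\phi')$ is equivalent to inequality for $(c,\phi)$.

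Thus in each case the $\phi'$-properness of $c'$ at the edge $uw$ is equivalent to the $\phi$-properness of $c$ at the same edge, and since this covers every edge of $G$, $c'$ is proper for $\phi'$ if and only if $c$ is proper for $\phi$. Consequently the bijection $c\mapsto c'$ restricts to a bijection between the $(\Gamma,\phi)$-colorings and the $(\Gamma,\phi')$-colorings of $G$, and in particular one set is nonempty if and only if the other is. There is no real obstacle here; the only point requiring attention is the sign convention on oriented edges, which is exactly why the definition of $\phi'$ adds $\alpha$ on edges into $v_0$ and subtracts $\alpha$ on edges out of $v_0$.
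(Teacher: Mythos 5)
Your proof is correct: the shift $c'(v_0)=c(v_0)+\alpha$, $c'(v)=c(v)$ otherwise, gives exactly the bijection between $(\Gamma,\phi)$- and $(\Gamma,\phi')$-colorings, and your edge-by-edge case check of the sign conventions is right. The paper states this proposition without proof, treating it as routine, and your argument is precisely the standard one it implicitly relies on.
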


\subsection{The function $\tau$}

In addition to the standard definitions in Section 2 above we introduce a collection of functions $\tau$ which, given a coloring of some vertex $v \in V(G)$, determines the colors at the neighboring vertices of $v$ that are not allowed by the coloring of $v$:

\begin{defi}
Given a function $\phi: E(G) \to \Z_5$ and a vertex $v$ with prescribed color $c(v)$ we will define the function $\tau_v: N(v) \to \Z_5$ to be: 
\begin{equation}
\tau_v(u) =
\begin{cases}
c(v) + \phi(uv) & \text{if $uv$ is directed towards $u$}, \\
c(v) - \phi(uv) & \text{if $uv$ is directed towards $v$}.
\end{cases}
\end{equation}
In case the coloring of $v$ is not prescribed we will define $\tau_v: \Z_5 \times N(v) \to \Z_5$ to be: 
\begin{equation}
\tau_v(\alpha, u) =
\begin{cases}
\alpha + \phi(uv) & \text{if $uv$ is directed towards $u$}, \\
\alpha - \phi(uv) & \text{if $uv$ is directed towards $v$}.
\end{cases}
\end{equation}
Furthermore, given $S \subseteq \Z_5$ we will define $\tau_v(S, u) := \{\tau_v(s, u) \mid s \in S\}$.
\end{defi}

Note, that $\tau$ is well-defined since $G$ is simple. Now, a $\Z_5$-coloring $c: V(G) \to \Z_5$ of $G$ is proper with respect to $\phi$ if and only if $c(u) \neq \tau_v(u)$ for all pairs of neighbors $v,u \in V(G)$.
Informally, if we give $v$ the color $\alpha$, then we cannot give $u$ the color $\tau_v(\alpha,u)$.

Note, that when using the $\tau$-function we will no longer need to specify the orientation of $G$.

\begin{obs} \label{obs1}
If $c(v) = \alpha$, then $\tau_u(\tau_v(u), v) = \alpha$ for $\alpha \in \Z_5$ (regardless of the value of $\phi(uv)$).
Similarly, $\tau_u(\tau_v(S,u), v) = S$ for $S \subseteq \Z_5$. 
This can also be expressed as $\tau_v(\alpha, u) = \beta$ if and only if $\tau_u(\beta, v) = \alpha$ for $\alpha,\beta \in \Z_5$, and $\tau_v(S_1, u) = S_2$ if and only if $\tau_u(S_2, v) = S_1$ for $S_1, S_2 \subseteq \Z_5$.
\end{obs}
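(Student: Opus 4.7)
The plan is to verify all four claims by unwinding the definition of $\tau$ and performing a case split on the orientation of the edge between $u$ and $v$. The essential content is just that $\tau_v(\cdot,u)$ and $\tau_u(\cdot,v)$ are mutually inverse translations on $\Z_5$; everything else follows formally.

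First I would fix a pair of neighbours $u,v$ and consider the two possible orientations of the edge between them. If it is oriented from $v$ to $u$, then by the first case of the definition of $\tau_v$ we have $\tau_v(\alpha,u) = \alpha + \phi(uv)$, while the same edge, viewed from $u$, falls into the second case of the definition of $\tau_u$, giving $\tau_u(\beta,v) = \beta - \phi(vu)$. Substituting $\beta = \tau_v(\alpha,u)$ and invoking the sign convention $\phi(vu) = -\phi(uv)$ yields $\tau_u(\tau_v(\alpha,u),v) = \alpha$. The reverse orientation is handled by the mirror computation, with the roles of the two subcases of the definition swapped. This establishes the first identity, namely that $\tau_u(\tau_v(u),v) = \alpha$ whenever $c(v) = \alpha$.

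From here the remaining assertions follow with no further case analysis. The set version $\tau_u(\tau_v(S,u),v) = S$ is immediate by applying the pointwise identity to each $s \in S$ and unwinding the defining equation $\tau_v(S,u) = \{\tau_v(s,u) \mid s \in S\}$. For the biconditional $\tau_v(\alpha,u) = \beta \iff \tau_u(\beta,v) = \alpha$, the pointwise identity says that $\tau_u(\cdot,v)$ is a left inverse of $\tau_v(\cdot,u)$; since both maps are translations on $\Z_5$ they are bijections, so each is the full two-sided inverse of the other, from which the biconditional is immediate. The analogous biconditional for subsets of $\Z_5$ then follows by taking images and preimages under these bijections.

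The only real obstacle is careful bookkeeping with the sign convention $\phi(wu) = -\phi(uw)$ and with matching up the two subcases of the definition of $\tau_v$ to the two subcases of $\tau_u$ for the same physical edge. Once the matching is done correctly, the core computation is a one-line substitution: a translation by $\phi(uv)$ composed with a translation by $-\phi(uv)$ is the identity on $\Z_5$, which is exactly why $\tau$ deserves to be called an \emph{involution} between the two vertex-views of a forbidden colour constraint.
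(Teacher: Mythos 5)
Your overall plan --- fix the edge, split on its orientation, observe that $\tau_v(\cdot,u)$ and $\tau_u(\cdot,v)$ are mutually inverse translations of $\Z_5$, and deduce the set statements and the biconditionals formally --- is exactly the intended argument; the paper states the observation without proof because it is just this definitional unwind. However, the one computation you actually display does not work as written. With the edge oriented from $v$ to $u$ you correctly take the first case of the definition for $\tau_v$, giving $\tau_v(\alpha,u)=\alpha+\phi(uv)$, and the second case for $\tau_u$, giving $\tau_u(\beta,v)=\beta-\phi(vu)$. But substituting and then ``invoking the sign convention $\phi(vu)=-\phi(uv)$'' gives
\[
\tau_u\bigl(\tau_v(\alpha,u),v\bigr)=\bigl(\alpha+\phi(uv)\bigr)-\phi(vu)=\alpha+\phi(uv)+\phi(uv)=\alpha+2\phi(uv),
\]
which equals $\alpha$ only when $\phi(uv)=0$. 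So the very step that was supposed to produce the cancellation instead double-counts the sign; no consistent reading of the two tokens makes your displayed derivation come out to $\alpha$.

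The point to fix is how $\phi$ is used inside the definition of $\tau$: the two cases of that definition already encode the orientation of the edge, so the token $\phi(uv)$ (or $\phi(vu)$) appearing there denotes the value of $\phi$ on the edge in its one fixed orientation. This reading is forced by the statement that $c$ is proper with respect to $\phi$ if and only if $c(u)\neq\tau_v(u)$ for all neighbors, and it is how the paper itself computes in the proof of Proposition \ref{prop:tau}. With that reading, the two occurrences in your formulas are the \emph{same} element of $\Z_5$, and the cancellation $(\alpha+\phi(uv))-\phi(vu)=\alpha$ is immediate, with no appeal to $\phi(vu)=-\phi(uv)$ at all. (Alternatively, if you insist on the signed convention for every token, you must first rewrite the ``directed towards $u$'' case of $\tau_v$ as $\alpha+\phi(vu)$, and then compose with $\beta-\phi(vu)$.) Either repair is one line, and the rest of your argument --- the mirror orientation, passing from the pointwise identity to sets, and the biconditionals via bijectivity of translations --- is fine as stated.
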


The function $\tau$ can also be defined for the more general DP-colorings introduced by Dvo{\v r}{\'a}k and Postle in \cite{dp}, and Observation \ref{obs1} also holds in this more general setup whereas the following Proposition \ref{prop:tau} which is an important feature of group colorings does not.

\begin{prop} \label{prop:tau}
Given $u,v,w \in V(G)$ such that $uv, vw, uw \in E(G)$. If  
$\tau_v(\tau_u(\alpha,v),w) = \tau_u(\alpha,w)$ for some $\alpha \in \Z_5$, then it holds for any $\alpha \in \Z_5$. 
\end{prop}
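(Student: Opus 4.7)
The plan is simply to unfold the definitions and observe that the stated equation, after the $\alpha$ terms cancel, reduces to a condition on $\phi$ alone which does not involve $\alpha$.

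To make the bookkeeping uniform, I would first introduce a shorthand: for a directed edge between $u$ and $v$, set $\psi(u,v) = \phi(uv)$ if $uv$ is directed toward $v$, and $\psi(u,v) = -\phi(uv)$ if $uv$ is directed toward $u$, so that $\psi(u,v) = -\psi(v,u)$. With this convention the definition of $\tau$ in both cases collapses to the single identity
\begin{equation*}
\tau_u(\alpha, v) = \alpha + \psi(u,v), \qquad \alpha \in \Z_5.
\end{equation*}
In other words, given a fixed edge $uv$, the map $\alpha \mapsto \tau_u(\alpha, v)$ is a translation of $\Z_5$ by the constant $\psi(u,v)$.

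Now I would apply this in sequence. For the triangle $u,v,w$, we have
\begin{equation*}
\tau_v\bigl(\tau_u(\alpha,v),w\bigr) = \tau_u(\alpha,v) + \psi(v,w) = \alpha + \psi(u,v) + \psi(v,w),
\end{equation*}
and on the other hand
\begin{equation*}
\tau_u(\alpha,w) = \alpha + \psi(u,w).
\end{equation*}
Hence the assumed equality $\tau_v(\tau_u(\alpha,v),w) = \tau_u(\alpha,w)$ is equivalent to
\begin{equation*}
\psi(u,v) + \psi(v,w) - \psi(u,w) = 0 \pmod{5},
\end{equation*}
a relation in which $\alpha$ has been cancelled from both sides.

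Since the resulting identity depends only on $\phi$ and the orientation of the three edges of the triangle, and not on $\alpha$, the hypothesis that it holds for \emph{some} $\alpha \in \Z_5$ forces the $\phi$-relation above, which in turn yields the equality $\tau_v(\tau_u(\alpha,v),w) = \tau_u(\alpha,w)$ for every $\alpha \in \Z_5$. There is no real obstacle here; the content of the proposition is just that $\tau_u(\cdot, v)$ acts as a translation, a feature of group coloring that is not shared by DP-coloring, which is precisely the comment the authors make just before stating the proposition.
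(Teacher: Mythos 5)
Your proof is correct and is essentially the paper's own argument: unfold the definition of $\tau$, observe that $\alpha$ cancels, and note that the resulting condition is a relation on $\phi$ alone (the paper handles the orientations by a without-loss-of-generality choice, while your $\psi$ shorthand does the same bookkeeping uniformly). No gap; nothing further is needed.
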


\begin{proof}
We can assume without loss of generality that $uv$ is directed towards $v$, $vw$ is directed towards $w$, and $wu$ is directed towards $u$.
If there exists an $\alpha \in \Z_5$, such that $\tau_v(\tau_u(\alpha,v),w) = \tau_u(\alpha,w)$, then 
\begin{equation}
(\alpha + \phi(uv)) + \phi(vw) = \alpha - \phi(wu).
\end{equation} 
Hence $\phi(uv) + \phi(vw) + \phi(wu) = 0 \pmod{5}$. Thus $\tau_v(\tau_u(\alpha,v),w) = \tau_u(\alpha,w)$ for any $\alpha \in \Z_5$.
\end{proof}

\section{$\Z_5$-colorings with precolored vertices}

In the rest of this paper we assume $G$ is an oriented plane near-triangulation with outer cycle $C: v_1v_2 \ldots v_k v_1$. 

\begin{defi}
Given $\phi: E(G) \to \Z_5$ we say that $G$ is \textit{$(\Z_5,3)$-extendable} with respect to $\phi$ and the vertices $v_1, v_2, v_k$ if the following holds: 
Assume that the vertices $v_k$, $v_1$ and $v_2$ are precolored $c(v_k), c(v_1), c(v_2)$, respectively, such that $c(v_k) \neq \tau_{v_1}(v_k)$ and $c(v_2) \neq \tau_{v_1}(v_2)$, and for each $v \in C \setminus \{v_1, v_2, v_k\}$, $F_v$ is a set containing at most two forbidden colors. For all other vertices $v$, $F_v$ is empty.
Then $c$ can be extended to a $(\Z_5, \phi)$-coloring of $G$ which we also call $c$ and which satisfies $c(v) \notin F_v$ for any $v \in C \setminus \{v_1, v_2, v_k\}$.
\end{defi}

Note, that the analogous definition of \textit{$(\Z_5,2)$-extendability} is used to prove Theorem \ref{thm:A} above which can be phrased as follows:

\begin{thm} \label{thm:2-ext}
Any oriented near-triangulation is $(\Z_5,2)$-extendable with respect to any $\phi$-function and any path on two vertices on the outer cycle.
\end{thm}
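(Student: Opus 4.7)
The plan is to run the standard Thomassen-style induction on $|V(G)|$, transposed from lists to the $\tau$-formalism. The base case is a triangle $v_1v_2v_3$, where the single non-precolored vertex $v_3$ has at most two colors forbidden by $F_{v_3}$ and one by each of $v_1,v_2$, for a total of at most four, so a legal color in $\Z_5$ exists. For the inductive step, first suppose $C$ has a chord $v_iv_j$. Splitting $G$ along the chord yields two near-triangulations sharing the edge $v_iv_j$; apply induction to the side containing $v_1v_2$ to color all of its vertices, including $v_i$ and $v_j$, and then apply induction to the other side with the now-colored $v_i,v_j$ as its precolored edge (properness of the new precoloring is inherited from the first coloring).

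If $C$ is chordless, consider $v_k$ and let $v_1 = u_0, u_1, \ldots, u_m, u_{m+1} = v_{k-1}$ be the neighbors of $v_k$ listed in their cyclic order around $v_k$. Chordlessness of $C$ together with the near-triangulation property forces each $u_i$ with $1 \leq i \leq m$ to be an interior vertex (so $F_{u_i} = \emptyset$) with consecutive $u_i$ adjacent. Since $|F_{v_k}| \leq 2$, pick two distinct colors $\alpha,\beta \in \Z_5 \setminus (F_{v_k} \cup \{\tau_{v_1}(v_k)\})$. Delete $v_k$ and set $F'_{u_i} := \{\tau_{v_k}(\alpha,u_i), \tau_{v_k}(\beta,u_i)\}$ for $i = 1,\ldots,m$, leaving all other forbidden sets unchanged. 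Each $F'_{u_i}$ has size exactly two because $\tau_{v_k}(\cdot,u_i)$ is a translation, hence a bijection, on $\Z_5$. The resulting near-triangulation $G-v_k$, with outer cycle $v_1v_2\cdots v_{k-1}u_mu_{m-1}\cdots u_1v_1$, satisfies the inductive hypothesis, yielding a proper coloring $c$.

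To extend $c$ to $v_k$, observe that $c(u_i) \notin F'_{u_i}$ combined with Observation \ref{obs1} gives $\tau_{u_i}(c(u_i),v_k) \notin \{\alpha,\beta\}$ for every $i$, so both $\alpha$ and $\beta$ remain legal at $v_k$ from the side of the $u_i$; by construction they are also legal with respect to $v_1$ and to $F_{v_k}$. Only a single color is forbidden at $v_k$ by $v_{k-1}$, so at least one of $\alpha,\beta$ is still available. The only step carrying any group-theoretic content is the distinctness $\tau_{v_k}(\alpha,u_i) \neq \tau_{v_k}(\beta,u_i)$, which is immediate from injectivity of translation on $\Z_5$. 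I expect no serious obstacle here: this is precisely the straightforward transcription of the original $5$-list-coloring proof into the $\tau$-calculus, in line with the authors' remark that the proof from \cite{t1} carries over word for word to \Cref{thm:A} and hence to its equivalent pre-coloring-extension formulation given above.
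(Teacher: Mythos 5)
Your proof is correct and is exactly the argument the paper relies on: Theorem \ref{thm:2-ext} is just Theorem \ref{thm:A} in extension form, which the authors justify by noting that Thomassen's 5-list-coloring induction from \cite{t1} carries over word for word, and your transcription into the $\tau$-formalism (chord splitting, deleting the neighbor $v_k$ of $v_1$ after forbidding $\tau_{v_k}(\alpha,u_i),\tau_{v_k}(\beta,u_i)$ at its interior neighbors) is precisely that proof. No gaps; the injectivity-of-translation remark is the only group-specific point and you handle it correctly.
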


This implies the following:

\begin{thm} \label{thm:shortcycle}
Let $\phi: E(G) \to \Z_5$ be given where $G$ is a near-triangulation with precolored outer cycle $C$ of length $k \leq 5$. 
Then $G$ has a $(\Z_5, \phi)$-coloring unless $C$ has length precisely 5, and $int(C)$ has a vertex $v$ joined to all vertices of $C$ such that $\{\tau_{v_1}(v), \ldots, \tau_{v_5}(v)\} = \Z_5$. 
\end{thm}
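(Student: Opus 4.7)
The plan is to induct on $|V(G) \setminus V(C)|$. If this number is $0$, then $G = C$ and the precoloring is the required $(\Z_5, \phi)$-coloring. In the inductive step, I first handle the chord case: if $C$ has a chord $v_iv_j$, it separates $G$ into two smaller near-triangulations with outer cycles of length strictly less than $k \leq 5$, hence at most $4$. Each sub-cycle inherits a valid precoloring from $C$, and since no exception arises for cycles of length $\leq 4$, induction on each side provides an extension; the two colorings glue along the chord into one on $G$.

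Now assume $C$ is chordless. Let $w$ be the third vertex of the inner triangle at edge $v_1v_2$; since $C$ has no chord and $|V(G)| > k$, $w$ is an interior vertex. For $k = 3$, any $c(w) \in \Z_5 \setminus \{\tau_{v_1}(w), \tau_{v_2}(w)\}$ (at least three choices) extends the precoloring to the $4$-cycle $v_1 w v_2 v_3$, which bounds a near-triangulation $G'$ with one fewer interior vertex, and induction ($k = 4$, no exception) completes the extension. For $k = 4$, the same construction yields a precolored $5$-cycle $v_1 w v_2 v_3 v_4$ bounding a smaller $G'$; induction on $G'$ could fail only through the stated $k = 5$ exception, which would require a unique interior vertex $u$ of $G'$ adjacent to all five outer vertices with $\{\tau_{v_1}(u), \tau_w(u), \tau_{v_2}(u), \tau_{v_3}(u), \tau_{v_4}(u)\} = \Z_5$. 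Uniqueness of $u$ follows from the near-triangulation structure (any second such vertex would have to sit inside one of the five triangles $v_i u v_{i+1}$ and be adjacent to only three boundary vertices), and as $c(w)$ varies through its three admissible values, $\tau_w(u)$ takes three distinct values by Observation \ref{obs1}; hence at most one choice of $c(w)$ is bad and a good one remains.

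The main case is $k = 5$. If $\text{int}(C)$ contains a vertex $u$ adjacent to all of $v_1, \ldots, v_5$, then either $\{\tau_{v_1}(u), \ldots, \tau_{v_5}(u)\} = \Z_5$, giving exactly the stated exception with no extension possible, or some color is missing from this set, in which case I color $u$ with that missing color and apply induction to the five triangles $v_iv_{i+1}u$. If no such dominating $u$ exists, I choose an interior vertex $u'$ with the maximum number of cycle-neighbors (necessarily between $2$ and $4$), color $u'$ avoiding the $\tau$-constraints from its cycle-neighbors, and split $G$ along the chords thereby created into near-triangulations whose outer cycles all have length at most $5$, to which induction applies. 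The main obstacle of the proof is this last subcase: when $C$ is chordless and no interior vertex dominates $C$, one must verify that a well-chosen interior vertex and its coloring always reduce $G$ to strictly smaller instances with outer cycles of length $\leq 5$, rather than to a $6$-cycle outside the theorem's range.
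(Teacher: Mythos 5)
Your argument works in the subcases where some interior vertex sees at least three vertices of $C$, or two non-consecutive ones (there your splitting plus the ``at most one bad choice'' analysis of the exceptional configuration parallels the paper's second half), but the subcase you yourself flag as ``the main obstacle'' is a genuine gap, not a loose end. If every interior vertex has at most two neighbours on $C$ and those neighbours are consecutive, then splitting along the two edges from your chosen vertex $u'$ leaves a region bounded by a $6$-cycle, which is outside the scope of the statement, and no choice of interior vertex does better. Concretely, take $C=v_1v_2v_3v_4v_5$, place a second $5$-cycle $u_1\cdots u_5$ inside with $u_i$ joined to $v_i,v_{i+1}$ (triangulating the annulus by the faces $v_iv_{i+1}u_i$ and $v_{i+1}u_iu_{i+1}$), and triangulate the interior of $u_1\cdots u_5$ with arbitrarily many further vertices: every interior vertex has at most two, consecutive, neighbours on $C$, so your reduction never applies and no induction confined to cycles of length at most $5$ reaches the deep interior. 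The paper handles precisely this situation by a different mechanism: when no vertex of $int(C)$ is joined to three or more vertices of $C$, it passes to the subgraph induced by $int(C)$, gives each of its vertices the at most two forbidden colors imposed (via $\tau$) by its at most two neighbours on $C$, and invokes the $(\Z_5,2)$-extendability theorem (Theorem \ref{thm:2-ext}), coloring blocks successively if that subgraph is not $2$-connected. This is where the full strength of the planar $\Z_5$-coloring theorem enters, and your proposal invokes neither it nor any substitute; without such a tool the case cannot be closed by induction on short precolored cycles alone.

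Two smaller points. First, in your $k=3$ and $k=4$ steps (and likewise when you split along chords), the vertex $w$ may already be adjacent to other precolored vertices of $C$, e.g.\ $v_3$ or $v_4$; you must also choose $c(w)$ avoiding $\tau_{v_3}(w)$, $\tau_{v_4}(w)$ in that event, or the ``inherited precoloring'' of the new outer cycle is not proper on its chords. This is easily repaired (there are at most four precolored neighbours, and when all four are present planarity rules out the exceptional configuration), but as written the claim ``any $c(w)\in\Z_5\setminus\{\tau_{v_1}(w),\tau_{v_2}(w)\}$ extends'' is false. Second, note that the paper's treatment of the case of a vertex $u$ with at least three neighbours on $C$ is essentially your dominating-vertex and $3$-or-$4$-neighbour analysis: color $u$ if possible, recurse on the cycles $v_i\cdots v_{i+j}uv_i$ of length at most $5$, and observe that the $5$-cycle exception can arise only when $u$ has exactly three consecutive neighbours on $C$, in which case $u$ has two admissible colors and at most one of them is bad. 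So the part of your proposal that is sound is the part the paper also does; the missing ingredient is the appeal to Theorem \ref{thm:2-ext}.
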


\begin{proof}
The proof is by induction on the number of vertices of $G$. 
If no vertex of $int(C)$ is joined to more than two vertices of $C$, then we consider the subgraph $H$ induced by the vertices in $int(C)$.
We let the set of forbidden colors of a vertex be the colors forbidden by its neighbors in $C$. By Theorem \ref{thm:2-ext}, $H$ is $(\Z_5, 2)$-extendable with these sets of forbidden colors. 
(If $H$ is not 2-connected, then we color the blocks of $H$ successively.)
So we may assume that some vertex $u$ has at least three neighbors in $C$. 
If it is not possible to color $u$, then $G$ satisfies the conclusion of Theorem \ref{thm:shortcycle}. 
On the other hand, if it is possible to color $u$, then we color it and complete the proof by induction by coloring the interior of each precolored cycle on the form $v_i  \cdots v_{i+j} u v_i$ (where $j=1,2,3$). The only case where this might not work is if there is some vertex $v$ in the interior of one of the colored cycles which is joined to all vertices in a colored 5-cycle, but then $u$ must have precisely three consecutive neighbors in $C$, and we therefore have two possibilities for coloring $u$. So, the exceptional case in Theorem \ref{thm:shortcycle} can be avoided.
\end{proof}

\section{Generalized wheels and generalized multi-wheels}

We define \textit{wheels}, \textit{broken wheels} and \textit{generalized wheels} as in \cite{t2}:
The outer cycle $C$ is of the form $v_1 v_2 \cdots v_k v_1$ where $v_1$ is the \textit{major vertex}, $v_k, v_2$ are \textit{principal neighbours}, $v_k v_1, v_1 v_2$ are \textit{principal edges}, and $v_k v_1 v_2$ is the \textit{principal path}.
If the interior of $C$ consists of the edges $v_1v_3, v_1v_4, \ldots, v_1v_{k-1}$, then we call $G$ a \textit{broken wheel}. 
If the interior of $C$ consists of a vertex $v$ and the edges $vv_1, vv_2, \ldots, vv_{k}$, then we call $G$ a \textit{wheel}. 
We define \textit{generalized wheels} to be the class of graphs containing all broken wheels and wheels, as well as the graphs obtained from two generalized wheels by identifying a principal edge in one of them with a principal edge in the other such that their major vertices are identified.

Note, that it is easy to see that a broken wheel with at least four vertices is not $(\Z_5,3)$-extendable with respect to $v_k, v_1, v_2$, and a wheel with an even number of (at least six) vertices is not $(\Z_5,3)$-extendable with respect to $v_k, v_1, v_2$.

In addition to these graphs we need a class of graphs which extends the wheels, as well as a class which extends the generalized wheels. 

We define an operation as follows:  Let $G$ be a generalized wheel and assume that $v_i, u, v_{i+1} \in V(G) \setminus \{v_1\}$ form a facial triangle where $v_i v_{i+1}$ is an edge on the outer cycle $C$ and $v_i u, v_{i+1} u$ are edges in $int(C)$. We obtain a new graph $G'$ from $G$ by adding a new vertex $w$ and the edges $uw, v_iw, v_{i+1}w$, as well as replacing the edge $v_i v_{i+1}$ by a path $v_i w_1 \cdots w_j v_{i+1}$ with $j \geq 0$ and adding the edges $w_1 w, \ldots, w_j w$. We say that we \textit{insert a wheel} into the triangle $v_i u v_{i+1}$.

\begin{defi}
We define \textit{multi-wheels} to be the class of graphs containing all wheels, as well as the graphs obtained from a multi-wheel by inserting a wheel into a triangle as above.
\end{defi}

\begin{defi} \label{defi:5}
We define \textit{generalized multi-wheels} to be the class of graphs containing all generalized wheels, as well as the graphs obtained from a generalized multi-wheel by inserting a wheel into a triangle as above.
\end{defi}

Note, that a broken wheel is also a generalized wheel (and therefore also a generalized multi-wheel), but a broken wheel is not a multi-wheel.

Observe, that if we replace the operation in Definition \ref{defi:5} by inserting generalized multi-wheels into triangles instead of inserting wheels, then we get the exact same class of graphs.

\begin{prop} \label{prop:genmulti}
Let $G$ be a generalized multi-wheel with outer cycle $C$. If $uvwu$ is a facial triangle with $u,v,w \in V(G)$, then at least one of $u,v,w$ is on $C - v_1$.
\end{prop}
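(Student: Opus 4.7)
The plan is to argue by induction on the number of construction steps used to build $G$ from the base pieces. The base cases are broken wheels and single wheels. In a broken wheel with outer cycle $v_1 v_2 \cdots v_k v_1$ every facial triangle has the form $v_1 v_i v_{i+1}$ with $2 \le i \le k-1$, and hence contains $v_i \in C - v_1$. In a wheel with hub $v$ every facial triangle $v v_i v_{i+1}$ contains two consecutive vertices of $C$, at most one of which is $v_1$, so the other lies in $C - v_1$. Thus the claim is immediate in each base case.

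For the composition step, where $G$ is obtained by identifying a principal edge of one generalized wheel with a principal edge of another with their major vertices both identified to $v_1$, every facial triangle of $G$ is a facial triangle of one of the two constituents, so the inductive hypothesis supplies a vertex on $C_{G_j} - v_1$. One only has to observe that this vertex remains on the new outer cycle, which is immediate because the composition keeps every non-$v_1$ vertex of each constituent's outer cycle on the new outer cycle $C$.

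The main technical step, and the place where I expect the bookkeeping to be heaviest, is the wheel-insertion. If $G'$ is obtained from $G$ by inserting a wheel into a facial triangle $v_i u v_{i+1}$ with $v_i, u, v_{i+1} \in V(G) \setminus \{v_1\}$, adding a central vertex $w$, possibly subdividing $v_i v_{i+1}$ into a path $v_i w_1 \cdots w_j v_{i+1}$ with $j \ge 0$, and joining $w$ to $u, v_i, v_{i+1}$ and every $w_\ell$, then the facial triangles of $G'$ split into two classes. The inherited triangles, namely every facial triangle of $G$ except the destroyed one $v_i u v_{i+1}$, retain their vertex in $C_G - v_1 \subseteq C' - v_1$ by the inductive hypothesis, since the operation neither removes outer-cycle vertices nor touches $v_1$. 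The newly created facial triangles are $v_i u w$ and $u v_{i+1} w$, together with either the single triangle $v_i w v_{i+1}$ when $j = 0$ or the fan $v_i w w_1, w_1 w w_2, \ldots, w_{j-1} w w_j, w_j w v_{i+1}$ when $j \ge 1$; each contains at least one vertex from $\{v_i, v_{i+1}, w_1, \ldots, w_j\}$, and each such vertex lies on $C'$ and is distinct from $v_1$ (the $v_i, v_{i+1}$ by hypothesis, the $w_\ell$ as newly introduced). The only real hazard is mis-listing the new faces in the $j = 0$ versus $j \ge 1$ sub-cases, but once this split is made correctly the induction closes cleanly.
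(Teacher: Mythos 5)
Your proof is correct and follows essentially the same route as the paper: verify the property on the base pieces (wheels, broken wheels, hence generalized wheels via the edge-identification) and observe that inserting a wheel into a facial triangle $v_iuv_{i+1}$ with $v_i,u,v_{i+1}\neq v_1$ only creates new facial triangles meeting $\{v_i,v_{i+1},w_1,\ldots,w_j\}\subseteq C'-v_1$ while leaving all other facial triangles and the non-$v_1$ outer vertices intact. The paper states this in two sentences; you have merely spelled out the same induction in full detail.
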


\begin{proof}
The statement is clearly true for all facial triangles in wheels, broken wheels and generalized wheels. 
As the statement remains true whenever a wheel is inserted into a triangle, it is also true for multi-wheels and generalized multi-wheels.
\end{proof}

\begin{lemma} \label{lemma1}
Let $\phi: E(G) \to \Z_5$ be given where $G$ is a multi-wheel. Assume that for each $v \in \{v_3, v_4, \ldots, v_{k-1}\}$, $F_v$ is a forbidden set containing at most two colors  in $\Z_5$. For all other vertices $v$, $F_v$ is empty.
Then there exists $\alpha \in \Z_5$ such that the $(\Z_5, \phi)$-colorings of $v_k, v_1, v_2$ which cannot be extended to $G$ satisfy that $c(v_k) - c(v_2) = \alpha$.
\end{lemma}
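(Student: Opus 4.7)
The plan is to induct on $|V(G)|$, following the recursive definition of multi-wheels: either $G$ is a wheel (base case), or $G$ is obtained from a smaller multi-wheel $G'$ by inserting a wheel $W$ into a facial triangle $xuy$ with $x,u,y\ne v_1$. Since the insertion affects neither $v_1$ nor its two neighbors on the outer cycle, the principal path $v_k v_1 v_2$ coincides in $G$ and $G'$, so a precoloring of $(v_k,v_1,v_2)$ transfers unchanged between the two graphs.

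\emph{Base case.} Suppose $G$ is a wheel with center $v$ adjacent to every outer vertex. A precoloring $(c(v_k),c(v_1),c(v_2))$ extends to $G$ iff some $\beta\in\Z_5\setminus\{\tau_{v_1}(v),\tau_{v_2}(v),\tau_{v_k}(v)\}$ (a set of size at least two) can be taken as $c(v)$ and the resulting path $v_2 v_3 \ldots v_{k-1} v_k$ colored with $v_2, v_k$ fixed and each interior $v_i$ drawing from $L_{v_i}^\beta := \Z_5\setminus(F_{v_i}\cup\{\tau_v(\beta,v_i)\})$, a list of size at least two, subject to the $\phi$-constraints on the path edges. Normalizing $\phi$ along these edges via Proposition~\ref{prop:newphi} reduces the inner task to list coloring a precolored path with $2$-element interior lists; a direct combinatorial analysis (propagating the set of feasible colors from $v_2$ toward $v_k$) then shows that, for each admissible $\beta$, the bad endpoint pairs $(c(v_2),c(v_k))$ are either empty or contained in a single coset $\{c(v_k)-c(v_2)=\alpha_\beta\}$. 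Intersecting over all admissible $\beta$ forces the overall bad precolorings into one such coset (or the empty set), yielding the desired $\alpha$.

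\emph{Inductive step.} Write the outer cycle of $W$ as $x w_1 \ldots w_j y u$, with new center $w$. Apply the inductive hypothesis to $G'$ (with forbidden sets inherited from $G$) to get $\alpha'\in\Z_5$ controlling the bad precolorings for $G'$, and apply the base case to $W$ with principal path $y u x$ (so $u$ is $W$'s major vertex) to get $\alpha_W\in\Z_5$ such that precolorings of $(y,u,x)$ not extending to $W$ satisfy $c(x)-c(y)=\alpha_W$. A precoloring is bad for $G$ iff either (a) it is bad for $G'$, in which case $c(v_k)-c(v_2)=\alpha'$, or (b) it extends to $G'$ but every such extension has $c(x)-c(y)=\alpha_W$. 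Case (b) is equivalent to the precoloring being bad for the graph $G^{\#}$ obtained from $G'$ by strengthening the constraint on the edge $xy$ to forbid both $\phi(xy)$ and the value giving $c(x)-c(y)=\alpha_W$; if these two are equal, case (b) is vacuous, and otherwise the strengthened constraint, by the additive structure of $\Z_5$, is equivalent to an extra one-vertex forbidden color at $x$ parametrized by $c(y)$. A short further argument, reducing to a strictly smaller instance of the lemma on a subgraph of $G'$ (for instance by splitting off the triangle $xuy$), then confines the case-(b) bad precolorings to a single coset, necessarily $\{c(v_k)-c(v_2)=\alpha'\}$ whenever both cases are nonempty.

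\emph{Main obstacle.} The crux is case (b) of the inductive step. In the list-coloring analogue of \cite{t2} the corresponding obstruction is just $c(v_k)=c(v_2)$ with no edge-weight interaction; here the shift $\alpha_W$ must combine coherently with $\phi(xy)$ via the group law of $\Z_5$, and using this to collapse a two-vertex constraint back to a one-vertex obstruction is precisely where the group structure of $\Z_5$ is essential and, as noted in the introduction, where the argument fails to extend to DP-colorings.
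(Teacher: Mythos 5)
Your overall strategy (induct along the recursive structure of multi-wheels, treat the wheel as the base case, and use the group structure to encode the inserted wheel's obstruction) is the paper's strategy, but both of your key steps have genuine gaps. In the base case, the asserted ``per-$\beta$ coset'' property is false: once the hub color $\beta$ is fixed, the remaining task is to color the path $v_2v_3\cdots v_{k-1}v_k$ whose interior vertices have lists of size two, and this is exactly the broken-wheel situation, for which (as the paper notes right after the statement of Lemma~\ref{lemma1}) no single difference $\alpha$ controls the bad endpoint pairs -- already with one interior vertex $v_3$ with list $\{x,y\}$ the bad pairs are the two pairs forcing $\{x,y\}=\{\tau_{v_2}(v_3),\tau_{v_k}(v_3)\}$, and these generally have different differences $c(v_k)-c(v_2)$. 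Consequently ``intersecting over all admissible $\beta$'' has nothing to intersect of the required form (and the admissible set of $\beta$ itself depends on the precoloring through $c(v_1)$). The actual argument must use the hub's list: for a bad precoloring one shows, after normalizing the spoke values via Proposition~\ref{prop:newphi}, that $L_v\setminus\{c(v_k),c(v_1),c(v_2)\}$ coincides both with the two colors of $L_{v_3}\setminus\tau_{v_2}(v_3)$ and with the two colors of $L_{v_{k-1}}\setminus\tau_{v_k}(v_{k-1})$; this forces either a unique bad precoloring (when $L_{v_3}\neq L_{v_{k-1}}$) or $\tau_{v_2}(v_3)=\tau_{v_k}(v_{k-1})$, which is precisely the fixed-difference conclusion. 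Your sketch does not contain this step, and the claim it rests on is wrong.

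In the inductive step you aim at an ``iff'' description of the bad precolorings of $G$, which creates an edge $xy$ carrying two forbidden differences ($\phi(xy)$ and the value coming from $\alpha_W$); this is outside the lemma's framework (note also that when $j\geq 1$ the edge $xy$ is not even in $G$, so $\phi(xy)$ is undefined), and the crucial assertion that the case-(b) bad precolorings fall into a single coset, ``necessarily $\{c(v_k)-c(v_2)=\alpha'\}$,'' is exactly the nontrivial point and is not proved -- nor is it what one should expect, since the relevant $\alpha$ for $G$ need not be the $\alpha'$ obtained from $G'$ with an arbitrary value on $xy$. The correct (and simpler) move is one-directional: apply Lemma~\ref{lemma1} to the inserted wheel to get $\alpha_W$, form the smaller multi-wheel $G''$ in which the wheel is replaced by the triangle and the restored edge $v_iv_j$ is given $\phi(v_iv_j)=\alpha_W$, and observe that every coloring extending to $G''$ extends to $G$ (the edge constraint guarantees extendability into the wheel). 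Hence the bad precolorings of $G$ are among those of $G''$, and the induction hypothesis applied to $G''$ supplies the single $\alpha$. Without this encoding-as-an-edge step and the one-directional inclusion, your ``short further argument'' is a placeholder for the heart of the proof.
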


It is easy to see such an $\alpha$ does not exist if $G$ is a broken wheel on 4 (and hence any larger number of) vertices. 
This may explain why the proof of Lemma \ref{lemma1} is not trivial.

\begin{proof}[Proof of Lemma 1.]
We prove Lemma \ref{lemma1} by induction on the number of vertices $n$. 
Assume $n \geq 5$ since otherwise there is nothing to prove.
Also, by Theorem \ref{thm:shortcycle} we can assume that $k \geq 5$.
Consider first the case where $G$ is a wheel. Let $v$ be the vertex not in $C$.
Suppose $v_k, v_1, v_2$  are colored $c(v_k), c(v_1), c(v_2)$, respectively, and that this coloring cannot be extended to $G$.
Construct $\phi': E(G) \to \Z_5$ from $\phi$ using Proposition \ref{prop:newphi} successively with $v_{k-1}, v_k, v_1, v_2, v_3$ respectively playing the role of $v_0$ such that $\phi'(v_{k-1}v) = \phi'(v_{k}v) = \phi'(v_{1}v) = \phi'(v_{2}v) = \phi'(v_{3}v) = 0$ with corresponding precoloring $c'(v_k), c'(v_1), c'(v_2)$ and tau-function $\tau'$.
It suffices to prove Lemma \ref{lemma1} with this $\phi'$ instead of $\phi$.
Now $\tau'_{v_i}(v) = c'(v_i)$ for $i \in \{k,1,2\}$, and similarly $\tau'_{v_i}(\alpha,v) = \alpha$ for $i \in \{3,k-1\}, \alpha \in \Z_5$.
Then $L_{v_3} \setminus \tau'_{v_2}(v_3)$ consists of precisely two colors of $\Z_5$, say $\alpha, \beta$, since otherwise we can color $v$ (with at least two color options) and extend that coloring to $G$ by applying Theorem \ref{thm:2-ext} to $G - v_1 - v_2$. 
Similarly, $L_{v_{k-1}} \setminus \tau'_{v_k}(v_{k-1})$ consists of precisely two colors, say $\gamma, \delta$. 
If $L_v \setminus \{c'(v_k), c'(v_1), c'(v_2)\}$ contains a color $\epsilon$ distinct from $\alpha, \beta$, then we can give $v$ that color, put $L_{v_3} = \{\alpha, \beta, \epsilon\}$, and then extend the resulting coloring to $G$ by applying Theorem \ref{thm:2-ext} to $G-v_1-v_2$, a contradiction.
So we may assume that $L_v \setminus \{c'(v_k), c'(v_1), c'(v_2)\} = \{\alpha, \beta\}$. In particular, $c'(v_k), c'(v_1), c'(v_2)$ are distinct.
Similarly, $L_v \setminus \{c'(v_k), c'(v_1), c'(v_2)\} = \{\gamma, \delta\}$.
Thus $L_{v_3}, L_{v_{k-1}}$ have at least two colors in common, namely $\alpha, \beta$.
Consider first the case where $L_{v_3}, L_{v_{k-1}}$ have precisely two colors in common. In this case we argue as in \cite{t2}:
$c'(v_2)$ is the unique color of $L_{v_3} \setminus L_{v_{k-1}}$, $c'(v_k)$ is the unique color of $L_{v_{k-1}} \setminus L_{v_3}$, and $c'(v_1)$ is the unique color of $L_v \setminus (L_{v_3} \cup L_{v_{k-1}})$.
This shows that the coloring of $v_1, v_2, v_k$ is unique.
Consider next the case where $L_{v_3}, L_{v_{k-1}}$ have more than two colors in common, that is, they are equal. In this case, assume without loss of generality that $v_2v_3$ is directed towards $v_3$ and $v_kv_{k-1}$ is directed towards $v_{k-1}$. 
We observe that $\tau'_{v_2}(v_3) = \tau'_{v_k}(v_{k-1})$, i.e. $c'(v_2) + \phi'(v_2v_3) = c'(v_k) + \phi'(v_kv_{k-1})$. Thus $c'(v_k) - c'(v_2) = \phi'(v_2v_3) -\phi'(v_kv_{k-1})$ can play the role of $\alpha$ in Lemma \ref{lemma1}.

Consider now the case where $G$ is a multi-wheel, but not a wheel. 
Recall, that $G$ is obtained from a multi-wheel by inserting a wheel into a triangle. More precisely, $G$ has a vertex $u$ in $int(C)$ joined to $v_i, v_{i+1}, \ldots, v_j$ and also joined to a vertex $v$ in $int(C)$ such that $v$ is joined to $v_i,v_j$. 
We may assume that $j>i+1$ since otherwise, we delete $u$ and complete the proof by induction.
Let $G'$ be the subgraph of $G$ which has outer cycle $C' = v v_i \cdots v_j v$. Note that $G'$ is a wheel.
By the induction hypothesis there exists $\alpha' \in \Z_5$ such that all colorings of $v_i, v, v_j$ which cannot be extended to $G'$ satisfy $c(v_j) - c(v_i) = \alpha'$.
Now use the induction hypothesis on the graph $G''$ obtained from $G$ by replacing $G'$ by the triangle $v v_i v_j v$ (with $v_iv_j$ directed towards $v_j$) where we define $\phi(v_i v_j) = \alpha'$. 
All colorings of $v_k, v_1, v_2$ that can be extended to $G''$ clearly also extends to $G$. Thus the colorings of $v_k,v_1,v_2$ that cannot be extended to $G$ satisfy the conclusion of Lemma \ref{lemma1} with the same $\alpha$ as the one we found for $G''$ using the induction hypothesis.
\end{proof}

Note, that $\alpha$ does not depend on $\phi(v_kv_1), \phi(v_1v_2)$. More precisely, if we let $\phi': E(G) \to \Z_5$ be a function that agrees with $\phi$ on all edges except $v_kv_1, v_1v_2$, then the $\alpha$ that works for $\phi$ also works for $\phi'$.

\begin{lemma} \label{lemma2}
Let $\phi: E(G) \to \Z_5$ be given where $G$ is a generalized multi-wheel with no separating triangles. Assume that each for each $v \in \{v_3, v_4, \ldots, v_{k-1}\}$, $F_v$ is a forbidden set containing at most two colors of $\Z_5$, and assume that $v_k, v_1, v_2$ are precolored. For all other vertices $v$, $F_v$ is empty.
Let $e$ be any edge in $E(G) \setminus \{v_kv_1, v_1v_2\}$. Then $G-e$ has a $(\Z_5, \phi)$-coloring $c: V(G) \to \Z_5$ that extends the precoloring and satisfies $c(v) \notin F_v$ for any $v \in C \setminus \{v_1, v_2, v_k\}$.
\end{lemma}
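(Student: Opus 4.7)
My plan is to proceed by induction on $n = |V(G)|$, verifying small cases (triangles, $K_4$) directly. For the inductive step I would exploit the hierarchical definition of generalized multi-wheels: $G$ is either (I) obtained from a smaller generalized multi-wheel $H$ by inserting a wheel $W$ with hub $w$ and new rim vertices $w_1, \ldots, w_j$ into a facial triangle $v_i u v_{i+1}$ of $H$; (II) a generalized wheel formed by identifying a principal edge $v_1 v_r$ of two smaller generalized wheels $G_1, G_2$ with common major vertex $v_1$; or (III) an indecomposable wheel or broken wheel. The hypothesis that $G$ has no separating triangles would be inherited by $H$, $G_1$, and $G_2$, so the inductive calls remain valid.

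In Case (I), if $e \in E(H)$, I would apply the inductive hypothesis to $H - e$ (note $e$ remains non-principal in $H$ since the principal edges of $H$ and $G$ coincide), obtaining a $(\Z_5, \phi)$-coloring of $H - e$ extending the precoloring. Viewing $W$ as a wheel with major vertex $u$ and principal path $v_i u v_{i+1}$, \Cref{lemma1} tells me that the resulting boundary colors $c(v_i), c(u), c(v_{i+1})$ extend to $W$ unless $c(v_i) - c(v_{i+1}) = \alpha_W$ for a specific $\alpha_W \in \Z_5$. If $e$ is instead a new edge of $W$, I would apply induction to $H - v_i v_{i+1}$ (a non-principal edge of $H$ since $v_i, v_{i+1} \neq v_1$) to color $H$, and then color the new vertices $w, w_1, \ldots, w_j$ of $W$ in $G-e$ using the slack created by deleting $e$, handling the small sub-cases $j \in \{0,1\}$ (where $W$ has at most $5$ vertices) by direct verification via \Cref{thm:shortcycle} and \Cref{thm:2-ext}.

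In Case (II), WLOG $e \in E(G_1)$. The plan is to choose $c(v_r) \in \Z_5 \setminus (F_{v_r} \cup \{\tau_{v_1}(v_r)\})$ — at least two candidates — so that both sides extend: induction on $G_1 - e$ works for any valid $c(v_r)$, while for $G_2$ I would use \Cref{lemma1} when $G_2$ is a multi-wheel (bad colorings of the precoloring $c(v_k), c(v_1), c(v_r)$ satisfy $c(v_k) - c(v_r) = \alpha_2$, eliminating at most one choice of $c(v_r)$) and recurse through the structural decomposition of $G_2$ otherwise. In Case (III), I would normalize $\phi$ via \Cref{prop:newphi} as in the wheel case of \Cref{lemma1}, making the $\tau$-values along the spokes trivial, and argue directly that the deletion of any non-principal edge breaks the unique one-parameter obstruction identified in \Cref{lemma1}.

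The hardest part will be the exceptional situation in Case (I): when the coloring of $H - e$ is forced to have the bad boundary difference $c(v_i) - c(v_{i+1}) = \alpha_W$, extension to $W$ fails. I expect to resolve this by combining the fact that the edge $v_i v_{i+1}$ still lies in $E(H-e)$ and already forces $c(v_{i+1}) - c(v_i) \neq \phi(v_i v_{i+1})$ with a careful analysis of how $\alpha_W$ depends on $\phi$ restricted to $W$ — possibly showing, in the spirit of the concluding computation in the wheel case of \Cref{lemma1}, that $\alpha_W$ coincides with $\pm \phi(v_i v_{i+1})$ so that the exception cannot actually arise — or else by strengthening the inductive hypothesis so that $H - e$ admits a coloring whose boundary difference $c(v_i) - c(v_{i+1})$ avoids a prescribed value. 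A parallel subtlety will need to be handled in Case (II) when $G_2$ is not a multi-wheel.
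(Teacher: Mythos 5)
Your overall plan --- structural induction on the generalized multi-wheel decomposition, with Lemma~\ref{lemma1} controlling extensions into inserted wheels --- is the same skeleton as the paper's proof, but the two places where the paper does its real work are exactly the places you leave open, so as written there are genuine gaps. The crux you flag in Case (I) (when $e\in E(H)$ and the inductive coloring of $H-e$ has the bad boundary difference $\alpha_W$) is not closed by either of your proposed fixes. The identity $\alpha_W=\pm\phi(v_iv_{i+1})$ cannot be expected and is not even well posed: the edge $v_iv_{i+1}$ of $H$ is not an edge of $G$ (it was replaced by the path $v_iw_1\cdots w_jv_{i+1}$, and the no-separating-triangle hypothesis forces at least one new rim vertex), so $\phi$ has no prescribed value on it, while $\alpha_W$ is determined by $\phi$ inside the inserted wheel and the forbidden sets. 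The paper's resolution is precisely this freedom: it replaces the inserted wheel by the triangle $vv_iv_jv$ and \emph{defines} $\phi(v_iv_j):=\alpha$, the bad difference supplied by Lemma~\ref{lemma1}; then the induction hypothesis applied to $G''-e$ automatically yields a coloring with $c(v_j)-c(v_i)\neq\alpha$, which extends into the wheel by Lemma~\ref{lemma1} (with the remark after Lemma~\ref{lemma1} covering $e\in\{vv_i,vv_j\}$). So no strengthening of the induction hypothesis is needed --- the desired constraint is simply encoded in the $\phi$-value of the phantom edge --- but this observation is the heart of the argument and your proposal does not contain it.

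Second, your Case (II) silently assumes that $e$ is not the identified principal edge $v_1v_r$ itself. That chord is a legitimate choice of $e$ (only $v_kv_1$ and $v_1v_2$ are excluded), and for it your plan breaks down: $v_1v_r$ is a principal edge of $G_1$ and of $G_2$, so ``induction on $G_1-e$'' is not available. The paper devotes a separate argument to this case: reduce $v_r$ to exactly three available colors, obtain two $(\Z_5,\phi)$-colorings of each of $G_1,G_2$ by Theorem~\ref{thm:2-ext} (the second by re-defining $\phi(v_1v_r)$ so as to exclude the first color of $v_r$), so each side realizes two distinct colors at $v_r$; since $2+2>3$, the two sides share a color at $v_r$, and combining the matching colorings colors $G-e$. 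Two smaller points: when $e$ is not the chord, the paper simply colors the chord-free side by Theorem~\ref{thm:2-ext} and recurses on the side containing $e$, which avoids your more delicate selection of $c(v_r)$; and your Case (III) for wheels (``deletion breaks the obstruction'') still needs the paper's explicit coloring orders, in particular the choice of the hub color when $e$ is a spoke to a precolored vertex so that $v_{k-1}$ retains two available colors.
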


\begin{proof}
By induction on the number of vertices in $G$.
The statement is easy to verify if $G$ is a broken wheel.
Consider now the case where $G$ is a wheel. Consider the subcase $e = vv_i$ where $v$ is the vertex in $int(C)$. If $3 \leq i \leq k-1$ then we color $v, v_3, v_4, \ldots, v_{i-1}, v_{k-1}, v_{k-2}, \ldots, v_i$ in that order.
If $i \in \{k,1,2\}$ then we choose the color of $v$ such that $\tau_v(v_{k-1}) \notin L_{v_{k-1}} \setminus \{\tau_{v_k}(v_{k-1})\}$ (in case that set has precisely two colors) where $L_{v_{k-1}}$ denotes the list of available colors at $v_{k-1}$. Then we color $v_3, v_4, \ldots, v_{k-1}$ in that order.
If $e = v_i v_{i+1}$ is on $C$ then we color $v, v_3, v_4, \ldots, v_{i}, v_{k-1},$ $v_{k-2}, \ldots, v_{i+1}$ in that order.

Consider next the case where $G$ is a multi-wheel, but not a wheel.
Recall, that $G$ is obtained from a multi-wheel by inserting a wheel into a triangle. More precisely, $G$ has a vertex $u$ in $int(C)$ joined to $v_i, v_{i+1}, \ldots, v_j$ and also joined to a vertex $v$ in $int(C)$ such that $v$ is joined to $v_i,v_j$.
Now $j \geq i+2$ as $G$ has no separating triangles.
Let $G'$ be the subgraph of $G$ which has outer cycle $C' = v v_i \cdots v_j v$. Note that $G'$ is a wheel. 
Let $G''$ be the graph obtained from $G$ by replacing $G'$ by the triangle $vv_iv_jv$ with $v_iv_j$ directed towards $v_j$.
If $e$ is in $E(G'')$, then use Lemma \ref{lemma1} to obtain $\alpha \in \Z_5$ such that all colorings of $v_i, v, v_j$ which cannot be extended to $G'$ satisfy $c(v_j) - c(v_i) = \alpha$. The induction hypothesis implies that there exists a $(\Z_5, \phi)$-coloring of $G'' -e$ where we define $\phi(v_i v_j) = \alpha$.
By Lemma \ref{lemma1} this coloring can be extended to $G'$. 
(If $e$ is one of the two edges $vv_i, vv_j$, then we use the remark following the proof of Lemma \ref{lemma1}.) Thus we get a $(\Z_5, \phi)$-coloring of $G-e$.

If $e$ is not in $E(G'')$, then the induction hypothesis implies that $G''-e'$ is $(\Z_5, \phi)$-colorable where $e' = v_iv_j$. This coloring can be extended to $G'-e$, again using the induction hypothesis. Thus $G-e$ is $(\Z_5, \phi)$-colorable.

Assume now that $G$ contains a chord $v_1v_i$ and that $e$ is not $v_1v_i$.
Then $v_1 v_i$ divides $G$ into near-triangulations $G_1,G_2$ where $G_1$ has outer cycle $v_1 v_2 \cdots v_i v_1$ and $G_2$ has outer cycle $v_1 v_i v_{i+1} \cdots v_1$.
Assume without loss of generality that $e \in G_1$. Then $G_2$ can be colored by Theorem \ref{thm:2-ext}, and the induction hypothesis implies that the coloring can be extended to $G_1 - e$.

Assume finally that $G$ is the union of two multi-wheels $G_1, G_2$ and $e = v_1 v_i$ is their common edge. 
We may assume that $v_i$ has precisely three available colors since otherwise we delete one or two available colors.
By Theorem \ref{thm:2-ext} each of $G_1, G_2$ can be $(\Z_5, \phi)$-colored, and we get another coloring of each graph by using Theorem \ref{thm:2-ext} on $G_1, G_2$ where we define $\phi(v_1v_i)$ to be the color of $v_i$ minus the color of $v_1$ in the first coloring. Thus we have two colorings of each of $G_1, G_2$ where $v_i$ has distinct colors.
Combining the colorings of $G_1, G_2$ in which the color of $v_i$ is the same gives a $(\Z_5, \phi)$-coloring of $G-e$.
\end{proof}

\begin{lemma} \label{lemma3}
Let $\phi: E(G) \to \Z_5$ be given where $G$ is a near-triangulation. 
\begin{enumerate}[label=\alph*)]
\item
Assume that the interior of the outer cycle $C$ has precisely two vertices $u,v$, and that there exists a natural number $i$, $3 \leq i \leq k-1$, such that $u$ is joined to $v, v_1, v_2, \ldots, v_i$, and $v$ is joined to $u, v_i, v_{i+1}, \ldots, v_k, v_1$. Then $G$ is $(\Z_5, 3)$-extendable with respect to $\phi$ and the path $v_kv_1v_2$.

\item
Assume next that the interior of the outer cycle $C$ has precisely two vertices $u,v$, and that there exists a natural number $i$, $4 \leq i \leq k-1$, such that $u$ is joined to $v, v_2, \ldots, v_i$, and $v$ is joined to $u, v_1,v_2, v_i , v_{i+1}, \ldots, v_k$. Then $G$ is $(\Z_5, 3)$-extendable with respect to $\phi$ and the path $v_kv_1v_2$.
\end{enumerate}
\end{lemma}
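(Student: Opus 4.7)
The plan is to exploit that in both parts each interior vertex $u,v$ has at most three precoloured neighbours among $\{v_1,v_2,v_k\}$, so at least two available colours. I would first colour $u$ and then $v$, next colour the ``shared'' vertex $v_i$ (which is adjacent to both $u$ and $v$), and finally extend to the two paths $v_3,\ldots,v_{i-1}$ and $v_{i+1},\ldots,v_{k-1}$ by applying Theorem~\ref{thm:2-ext} to two sub-fans of $G$. The $4$-cycle $v_1 u v_i v$ in Part~(a) or $v_2 u v_i v$ in Part~(b) partitions $G$ into two near-triangulated broken wheels $G_A$ (apex $u$, on $\{v_1,\ldots,v_i,u\}$ in Part~(a), or $\{v_2,\ldots,v_i,u\}$ in Part~(b)) and $G_B$ (apex $v$, on the remaining outer-cycle vertices together with $v$).

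After choosing $c(u)$ among its $\ge 3$ options and $c(v)$ among its remaining $\ge 2$ options (also avoiding $\tau_u(v)$), I would colour $v_i$ using one of its $\ge 5-|F_{v_i}|-2\ge 1$ available colours. Then on each sub-fan only one precoloured outer vertex remains besides the apex--$v_i$ pair, so I would apply Theorem~\ref{thm:2-ext} with the precoloured edge chosen to be the edge from the apex to $v_i$ ($uv_i$ on $G_A$, $vv_i$ on $G_B$) and absorb the remaining precolouring into the forbidden list of its outer-cycle neighbour (for instance, $c(v_2)$'s constraint $\tau_{v_2}(v_3)$ added to $F_{v_3}$, and symmetrically $c(v_k)$'s constraint $\tau_{v_k}(v_{k-1})$ added to $F_{v_{k-1}}$).

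The main obstacle is that absorbing such a precoloured constraint may push $|F_{v_3}|$ from $2$ to $3$, violating the hypothesis of Theorem~\ref{thm:2-ext}. I would avoid this by choosing $c(u)$ so that the absorbed constraint already coincides with the $u$-induced one, $\tau_u(v_3)=\tau_{v_2}(v_3)$: a single linear equation in $\Z_5$ with a unique solution, which lies outside $\{\tau_{v_1}(u),\tau_{v_2}(u)\}$ in all but one exceptional configuration; the symmetric trick on $G_B$ uses $c(v)$ to absorb $c(v_k)$'s constraint at $v_{k-1}$. The hardest case is when both coincidences are simultaneously unattainable within the admissible colours; there I would invoke Proposition~\ref{prop:newphi} to normalize $\phi$ at $u$ and $v$ and Proposition~\ref{prop:tau} on the triangles containing $u$, $v$, and a common outer neighbour to derive a structural identity forcing compatibility, finishing by a direct verification on the small residual sub-problem. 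In Part~(b), $G$ is a multi-wheel, so Lemma~\ref{lemma1} further restricts the potentially non-extendable precolourings of $v_k,v_1,v_2$ to a single one-parameter family $c(v_k)-c(v_2)=\alpha$; I would close Part~(b) by verifying explicitly that even this family extends, using that $v$ is adjacent to all three of $v_1,v_2,v_k$.
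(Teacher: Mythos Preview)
Your high-level decomposition into two fans sharing $v_i$ is the same picture the paper uses, but the execution has a genuine gap at precisely the point you flag as ``the main obstacle''.

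First, the coincidence trick $\tau_u(v_3)=\tau_{v_2}(v_3)$ is more restrictive than you suggest. It pins $c(u)$ to a \emph{single} value in $\Z_5$, and that value can equal $\tau_{v_1}(u)$ (there is no triangle $v_1uv_3$, so Proposition~\ref{prop:tau} gives you nothing here) as well as $\tau_{v_2}(u)$; this is not ``one exceptional configuration''. The paper avoids this trap by asking for much less: it chooses $c(u)\in S_u$ so that $L_{v_3}\setminus\{\tau_{v_2}(v_3),\tau_u(v_3)\}$ still has two colours. Since $|S_u|\ge 3$ and at most one value of $c(u)$ can kill a colour of the two-element set $L_{v_3}\setminus\{\tau_{v_2}(v_3)\}$, this is \emph{always} possible. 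With that in hand the paper does not colour $v_i$ at all; it greedily colours $v_{k-1},v_{k-2},\ldots,v_3$, which succeeds provided one can first pick $c(v)$ so that $L_{v_i}\setminus\{\tau_u(v_i),\tau_v(v_i)\}$ has two colours. The real work in part~(a) is the case where \emph{no} admissible $c(v)$ achieves this: the paper then runs several trial colourings, uses Proposition~\ref{prop:tau} on the triangle $uvv_i$ to force the permutation structure $\tau_u(\beta_u,v)=\beta_v$, $\tau_u(\gamma_u,v)=\gamma_v$, and finally exhibits an explicit colouring $(c(u),c(v),c(v_i))=(\gamma_u,\beta_v,\alpha_i)$ that reuses pieces of two earlier failed attempts. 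Your sentence ``derive a structural identity forcing compatibility, finishing by a direct verification on the small residual sub-problem'' is exactly where all the content lies, and nothing in your outline indicates how it would go.

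Second, for part~(b) your appeal to Lemma~\ref{lemma1} is not a reduction that finishes the argument. Lemma~\ref{lemma1} only says that \emph{if} some precolouring fails to extend, then $c(v_k)-c(v_2)$ takes one particular value $\alpha$; you still have to prove that even those precolourings extend, and ``verifying explicitly that even this family extends'' is the entire lemma. (Note also that in~(b) the vertex $v$ is adjacent to all three of $v_1,v_2,v_k$, so after avoiding $\tau_u(v)$ you may have only one colour left for $v$, not two, and your symmetric coincidence at $v_{k-1}$ can then be unattainable outright.) The paper proceeds quite differently: it deletes $v_1,v_2,v_k$, tracks the resulting list sizes on $H$, and argues by induction on $|V(H)|$ via a degree-based case split on $v$ (degree $2$, $3$, or the general case reducing to these), again using Proposition~\ref{prop:tau} at one key step.
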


\begin{proof}[Proof of a)]
Assume that $v_k, v_1, v_2$ are precolored.
Let $S_u = \Z_5 \setminus \{\tau_{v_1}(u), \tau_{v_2}(u)\}$, $S_v = \Z_5 \setminus \{\tau_{v_k}(v), \tau_{v_1}(v)\}$, and $S_i = L_{v_i}$.

We give $u$ a color from $S_u$, say $\alpha_u$, such that $L_{v_3} \setminus \{\tau_{v_2}(v_3), \tau_u(\alpha_u, v_3)\}$ contains at least two colors.
If $i=k-1$ then we color $v_{k-1}, v_{k-2}, \ldots, v_3, v$ in that order. So assume that $i \leq k-2$, and, similarly, $i \geq 4$.

If it is now possible to color $v$ such that $L_{v_i} \setminus \{\tau_v(v_i), \tau_{u}(v_i)\}$ has at least two colors, then it is easy to complete the coloring by coloring $v_{k-1}, v_{k-2}, \ldots, v_3$ in that order. 
So we may assume that such colorings of $u$ and $v$, respectively, are not possible.
Then we must have $|S_v| = |S_i| = 3$, so we let $S_v = \{\alpha_v, \beta_v, \gamma_v\}$, $S_i = \{\alpha_i, \beta_i, \gamma_i\}$.
In particular, after $u$ has received color $\alpha_u$, the colors $\tau_u(\alpha_u, v) =: \alpha_v$ and $\tau_u(\alpha_u, v_i) =: \alpha_i$ are no longer available at $v$ and $v_i$, respectively, and furthermore $\tau_v(\beta_v, v_i) =: \beta_i$ and $\tau_v(\gamma_v, v_i) =: \gamma_i$ are the remaining available colors at $v_i$. 

Similarly, we can choose a color $\delta$ from $S_v$, such that $L_{v_{k-1}} \setminus \{\tau_{v_k}(v_{k-1}), \tau_v(\delta, v_{k-1})\}$ contains at least two colors. If $\delta$ is not $\alpha_v$, then we may color $G$ by letting $u$ have color $\alpha_u$, $v$ have color $\delta$, and completing the coloring by coloring $v_i, v_{i-1}, \ldots, v_3, v_{i+1}, \ldots, v_{k-1}$ in that order. So we may assume that $\delta = \alpha_v$.
As above, we conclude $|S_u| = 3$, so we let $S_u = \{\alpha_u, \beta_u, \gamma_u\}$. 
In particular, after $v$ has received color $\alpha_v$, the colors $\tau_v(\alpha_v, u) = \alpha_u$ and $\tau_v(\alpha_v, v_i) = \alpha_i$ (the latter equality holds since we know $\tau_v(\beta_v, v_i) = \beta_i$ and $\tau_v(\gamma_v, v_i) = \gamma_i$) are no longer available at $u$ and $v_i$, respectively. Choose the notation for $\beta_u, \gamma_u$ such that $\tau_u(\beta_u, v_i) = \beta_i$ and $\tau_u(\gamma_u, v_i) = \gamma_i$ are the remaining available colors at $v_i$.
Using Proposition \ref{prop:tau}, we get that $\tau_u(\beta_u, v) = \beta_v$ and $\tau_u(\gamma_u, v) = \gamma_v$. Thus, on the triangle $uvv_iu$ any $(\Z_5, \phi)$-coloring must consist of one $\alpha$, one $\beta$ and one $\gamma$.

Now, we give $u$ the color $\alpha_u$. If  $\tau_v(\beta_v, v_{k-1})$ is not in $L_{v_{k-1}} \setminus \tau_{v_k}(v_{k-1})$ then we give $v$ color $\beta_v$ and we color $v_i, v_{i+1}, \ldots, v_{k-1}, v_{i-1}, \ldots, v_3$. 
So we may assume that $\tau_v(\beta_v, v_{k-1}),$ $\tau_v(\gamma_v, v_{k-1})$ are the only colors in $L_{v_{k-1}} \setminus \tau_{v_k}(v_{k-1})$.

We give $u$ the color $\alpha_u$, we give $v$ the color $\beta_v$, and we color $v_{k-1}, v_{k-2}, \ldots, v_{i+1}$. If this coloring can be extended to $v_i$, it is easy to complete the coloring by coloring $v_{i-1}, \ldots, v_3$.
So we may assume that $v_{i+1}$ has color $\tau_{v_i}(\gamma_i, v_{i+1})$.

We now try another coloring. We give $u$ the color $\gamma_u$, we give $v$ the color $\alpha_v$, and we color $v_3, v_4, \ldots, v_{i-1}$. We may assume that this coloring cannot be extended to $v_i$, that is, $v_{i-1}$ has color $\tau_{v_i}(\beta_i, v_{i-1})$.

Now we keep the colors of $v_3, v_4, \ldots, v_{i-1}, v_{i+1}, \ldots, v_{k-1}$ given above. And we give $u, v, v_i$ the colors $\gamma_u, \beta_v, \alpha_i$, respectively. 
This gives a $(\Z_5, \phi)$-coloring of $G$.
\end{proof}

\begin{proof}[Proof of b)]
Assume again that $v_k,v_1,v_2$ are precolored. 
We delete the precolored vertices and call the resulting graph $H$. 
Note that $v_3,v_{k-1},v$ each has at least two available colors, $u$ has at least four available colors, and each other vertex has at least three available colors. 
We complete the proof by induction on the number of vertices of $H$. 
 
Consider first the case where $i=k-1$, that is, $v$ has degree 2 in $H$. 
It is easy to see that we can give $u$ a color such that two of $v_3,v_{k-1},v$ still has at least two available colors. 
We then color the third of $v_3,v_{k-1},v$, and thereafter it is easy to color the remaining vertices one by one.

Consider next the case where $i=k-2$, that is, $v$ has degree 3 in $H$.
If possible, we give $v_{k-2}$ a color such that each of $v, v_{k-1}$ still has two available colors. 
Then we delete $v_{k-2}, v_{k-1}$ and can easily color the rest of $H$. 
So assume that such a coloring of $v_{k-2}$ is not possible.
If it is possible to color one of $v,v_{k-1}$ such that $v_{k-2}$ still has three available colors, then we color both of $v,v_{k-1}$ such that $v_{k-2}$ still has two available colors, we delete these two vertices, and then it is again easy to color the rest. 
So, we can assume that no such coloring of $v$ or $v_{k-1}$ is possible. 
Then $L_{v_{k-2}}$ contains a color $\alpha$ such that if we give $v_{k-2}$ the color $\alpha$, then each of $v,v_{k-1}$ has precisely one available color left. 
Now let $\beta, \gamma$ be two other colors in $L_{v_{k-2}}$.  
We choose the notation such that $\tau_{v_{k-2}}(\alpha, v) = \alpha_v$ and $\tau_{v_{k-2}}(\beta, v) = \beta_v$ where $\alpha_v, \beta_v \in L_v$. 
And we choose the notation such that $\tau_{v_{k-2}}(\alpha, v_{k-1}) = \alpha'$ and $\tau_{v_{k-2}}(\gamma, v_{k-1}) = \gamma'$ where $\alpha', \gamma' \in L_{v_{k-1}}$.
If we can give $v_{k-1}$ a color such that $\alpha_v,\beta_v$ are still available at $v$, then the proof reduces to the previous case where $v$ has degree 2. 
So, such a coloring of $v_{k-1}$ is not possible. 
Now we use Proposition \ref{prop:tau} to conclude that $\tau_{v_{k-1}}(\alpha', v) = \beta_v$ and $\tau_{v_{k-1}}(\gamma', v) = \alpha_v$. 
Now we delete $v_{k-1}$ and repeat the proof in the case where $v$ has degree 2. 
We let $v_{k-2}$ have the available colors $\alpha,\beta$. 
It is easy to see that the coloring of $H-v_{k-1}$ extends to $H$. 
 
Consider finally the case where $i<k-2$, that is, $v_{k-2}$ has degree 3 in $H$. 
We repeat the proof above with the exception that where we above after deleting vertices color the rest of the graph, we will in this case use induction on the remaining graph.
\end{proof}

\begin{cor} \label{cor:inner}
Assume $G$ is a multi-wheel with no separating triangle and with at least two inner vertices such that all inner vertices are joined to $v_2$. 
Then $G$ is $(\Z_5,3)$-extendable.
\end{cor}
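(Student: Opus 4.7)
The plan is to prove Corollary \ref{cor:inner} by induction on the number of inner vertices $m \geq 2$ of $G$. A useful preliminary observation is that, since all inner vertices are adjacent to $v_2$ and $v_2$ lies on the outer cycle, the inner vertices $u_1, \ldots, u_m$ appear in a single block in the cyclic order around $v_2$, so the cyclic order of $N(v_2)$ is exactly $v_1, u_1, u_2, \ldots, u_m, v_3$. In particular, consecutive $u_j, u_{j+1}$ are adjacent (through the face $v_2 u_j u_{j+1}$), $u_1$ is adjacent to $v_1$, and $u_m$ is adjacent to $v_3$.

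For the base case $m = 2$, I would argue that the multi-wheel structure together with the hypotheses (no separating triangle, both inner vertices adjacent to $v_2$) forces $G$ to match the configuration of Lemma \ref{lemma3}(b). Namely, after relabeling $u := u_1$ and $v := u_2$, one should obtain that $u$ is joined to $v, v_2, v_3, \ldots, v_i$ and $v$ is joined to $u, v_1, v_2, v_i, v_{i+1}, \ldots, v_k$ for some $4 \leq i \leq k-1$. Verifying this forced form requires unpacking the multi-wheel construction, observing that a wheel insertion is the only way to get a second inner vertex, and that absence of separating triangles rules out the degenerate cases. Lemma \ref{lemma3}(b) then directly yields $(\Z_5, 3)$-extendability.

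For the inductive step $m \geq 3$, I would identify a wheel that can be regarded as inserted last in a valid construction of $G$; this insertion contributes an inner vertex $u^*$ and possibly new outer vertices $w_1, \ldots, w_j$ along a subarc of the outer cycle. Let $G^*$ be the smaller multi-wheel obtained by reversing this insertion. I would verify that $G^*$ still has all its inner vertices joined to $v_2$, still has no separating triangle, and still has at least two inner vertices, so the inductive hypothesis applies to $G^*$ (after propagating the precoloring and forbidden sets). One then extends the resulting coloring of $G^*$ to $G$ by first coloring $u^*$ and then $w_1, \ldots, w_j$: the color of $u^*$ is constrained by at most four already-colored neighbors ($v_2$, an adjacent inner vertex, and the two endpoints of the replaced edge), while each $w_i$ has a forbidden set of size at most two together with constraints from its neighbors, so the extension is completed by the kind of counting argument used in Theorem \ref{thm:2-ext}.

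The main obstacle I anticipate is in the inductive step. First, one has to show that a valid construction order exists in which the last insertion is genuinely reversible, i.e.\ its removal preserves the property ``all inner vertices joined to $v_2$'' and does not touch $v_1, v_2, v_k$; this is a structural claim about multi-wheels built around $v_2$ that requires care. Second, one must show that the extension from $G^*$ to $G$ is always possible, including in the tight subcase where $u^*$ has only one available color after constraints from $v_2$, the adjacent inner vertex, and the two endpoints of the replaced edge, and where at the same time some $w_i$ has a forbidden set of size two. Separating the cases $j = 0$ and $j \geq 1$, and in the latter case invoking Theorem \ref{thm:2-ext} on the small wheel attached to $u^*$, should handle these tight configurations.
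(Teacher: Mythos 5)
Your structural setup and base case are fine: every wheel insertion must take place in a facial triangle containing $v_2$, so the inner vertices form a path $u_1u_2\cdots u_m$ with $u_1$ adjacent to $v_1$ and $u_m$ adjacent to $v_3$, and for $m=2$ the absence of separating triangles forces exactly the configuration of Lemma \ref{lemma3} b) (note only that the vertex playing the role of $u$ there, joined to $v_2,\ldots,v_i$, is the one adjacent to $v_3$, i.e.\ your $u_m$, so your relabeling is swapped -- a cosmetic point). The genuine gap is in your inductive step. You delete the last hub $u^*=u_m$ together with the subdivision vertices $w_1,\ldots,w_j$, apply induction to $G^*$, and only then try to color $u^*$ and the $w_i$. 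Since the replaced edge is incident with $v_2$, at that moment the three consecutive rim vertices $x$, $u_{m-1}$, $v_2$ of the wheel attached to $u^*$ are already colored, while $w_1,\ldots,w_j$ may each carry two forbidden colors. What you are asking for is precisely $(\Z_5,3)$-extendability of a wheel with the precolored path $xu_{m-1}v_2$, and, as noted in Section 4, wheels with an even number of (at least six) vertices are exceptions: for suitable $\phi$ and forbidden sets on $w_1,w_2$ (already with $j=2$) the extension simply fails. Invoking Theorem \ref{thm:2-ext} cannot rescue this, because that theorem permits only two precolored vertices, and you have no control over the colors that the induction on $G^*$ assigns to $x$ and $u_{m-1}$; nor can you reserve colors for $v_3$ when coloring $u^*$, since $u^*$ may then have no admissible color left (three colored neighbours plus two reserved constraints).

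The paper's proof sidesteps exactly this by never removing the hub. It reserves two colors of $L_{v_3}\setminus\{\tau_{v_2}(v_3)\}$, forbids the corresponding colors at $u_m$, and deletes only the degree-$3$ outer neighbours of $u_m$ (which have degree $3$ because there is no separating triangle). In the resulting graph $u_m$ lies on the outer cycle with a two-element forbidden set -- which the $(\Z_5,3)$-extendability hypothesis tolerates -- and the graph is again a multi-wheel with all (now $m-1\geq 2$) inner vertices joined to $v_2$, so the induction applies and colors $u_m$ compatibly with $v_2$, $u_{m-1}$ and $x$ automatically. The deleted rim vertices are then colored greedily ending with $v_3$, which is protected by the reserved colors. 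If you insist on your ``reverse the last insertion'' scheme, you would need a strengthened induction giving control over (or several choices for) the colors at $u_{m-1}$ and $x$, which is Lemma \ref{lemma1}-type information and in effect reproduces the paper's argument; as written, the extension step of your proposal does not go through.
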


\begin{proof}
$G$ has a unique path $v_1u_1u_2\cdots u_qv_2$, such that all of $u_1,u_2,\ldots,u_q$ are joined to $v_2$. 
The proof is by induction on $q$. If $q=2$, we use Lemma \ref{lemma3} b). 
So assume $q>2$. Since $G$ has no separating triangle, $v_3$ has degree 3. 
Now select two available colors in $L_{v_3}$, delete those colors from $L_{u_q}=\Z_5$, delete $v_3$ and all other neighbors of $u_q$ on $C$ of degree 3, and complete the proof by induction.
\end{proof}

\section{$(\Z_5,3)$-extendability}

As in \cite{t2} we now characterize the near-triangulations that are not $(\Z_5, 3)$-extendable.
Theorem \ref{thm:3} below is similar to Theorem 3 in \cite{t2} except that ``generalized wheel" in \cite{t2} is replaced by ``generalized multi-wheel". 
Figure \ref{fig:thm3} below shows an example of a graph which is a generalized multi-wheel but not a generalized wheel, whose precoloring does not extend to a coloring of the whole graph.

\begin{figure}[ht]
	\centering
	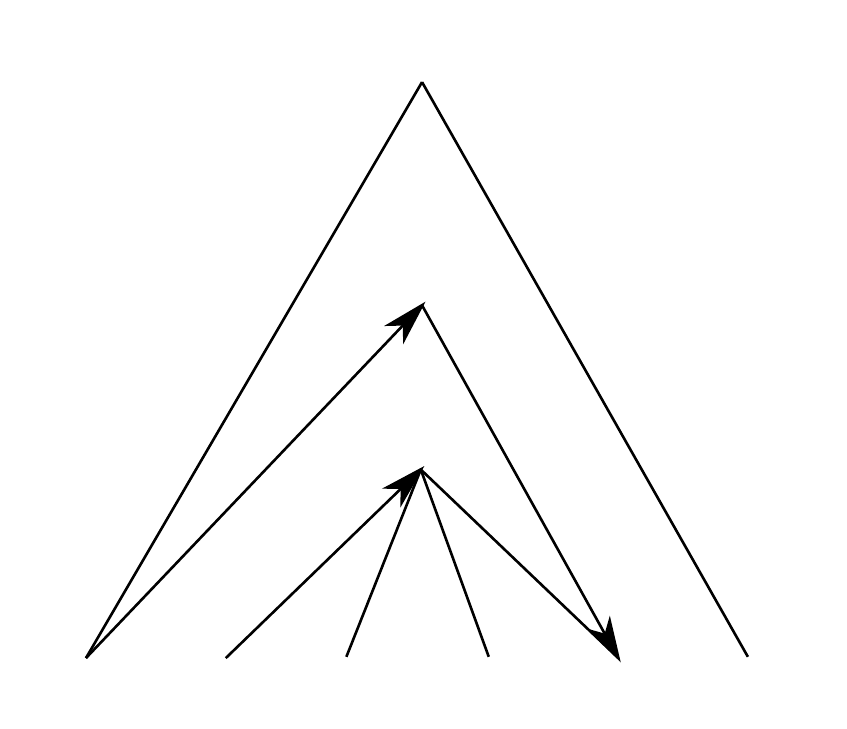
	\caption{A generalized multi-wheel whose precoloring $c(v_k) = c(v_1) = c(v_2) = 0$ does not extend to a coloring of the whole graph. (Note, that unlabelled edges $e$ have $\phi(e) = 0$.)}
	\label{fig:thm3}
\end{figure}

\begin{thm} \label{thm:3}
Let $\phi: E(G) \to \Z_5$ be given where $G$ is a plane near-triangulation with outer cycle $C: v_1 v_2 \cdots v_k v_1$. 
Assume that the vertices $v_k$, $v_1$ and $v_2$ are precolored, and for each $v \in C \setminus \{v_1, v_2, v_k\}$, $F_v$ is a set containing at most two forbidden colors. For all other vertices $v$, $F_v$ is empty.
Then $G$ has a $(\Z_5, \phi)$-coloring $c: V(G) \to \Z_5$ which extends the precoloring of $v_k, v_1, v_2$ and which satisfies $c(v) \notin F_v$ for any $v \in C \setminus \{v_1, v_2, v_k\}$, unless $G$ contains a subgraph $G'$ which is a generalized multi-wheel whose principal path is $v_kv_1v_2$, and all other vertices on the outer cycle of $G'$ are on $C$ and have precisely two forbidden colors.
\end{thm}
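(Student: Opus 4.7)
The plan is to prove Theorem \ref{thm:3} by induction on $|V(G)|$, closely mirroring the proof of the corresponding list-coloring theorem in \cite{t2} but systematically replacing ``generalized wheel'' with ``generalized multi-wheel'' and invoking Lemmas \ref{lemma1}, \ref{lemma2}, \ref{lemma3} and Corollary \ref{cor:inner} in the role of their list-coloring analogues. The base cases with $k \leq 5$ are handled by Theorem \ref{thm:shortcycle}. First I would dispose of separating triangles: if $T \subseteq G$ is one, splitting $G$ into an outer piece $G_1 \supseteq C$ and an inner piece $G_2$ with outer cycle $T$, the induction hypothesis either supplies the desired coloring of $G_1$ (extended into $G_2$ by Theorem \ref{thm:shortcycle}, which has no exceptional configuration inside a triangle) or returns a bad generalized multi-wheel already contained in $G$. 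Chords of $C$ are handled analogously. For a chord $v_1v_j$ with $3 \leq j \leq k-1$, the piece $G_1$ with outer cycle $v_1v_2\cdots v_j v_1$ is first colored using Theorem \ref{thm:2-ext} with $v_1, v_2$ precolored; the induced color of $v_j$ then serves as a third precolored vertex for the induction hypothesis applied to the piece $G_2$ with outer cycle $v_1v_jv_{j+1}\cdots v_k v_1$ and principal path $v_kv_1v_j$. Chords $v_iv_j$ not involving $v_1$ are treated in the same spirit, and Proposition \ref{prop:newphi} is used to transfer $\phi$-values across the chord without loss of generality. When both sides obstruct the extension, Lemma \ref{lemma1} (which controls the differences $c(v_k)-c(v_2)$ of bad colorings) is what permits the two sub-obstructions to be glued along the chord into a single generalized multi-wheel of $G$ whose principal path remains $v_kv_1v_2$.

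In the main case, $G$ has no chord of $C$ and no separating triangle. Following \cite{t2}, I would look at $v_3$: its $C$-neighbors are only $v_2$ and $v_4$, and its interior neighbors form a path $u_1u_2\cdots u_s$ with $u_1 \sim v_2$ and $u_s \sim v_4$. Since $|L_{v_3}| \geq 3$ and only $\tau_{v_2}(v_3)$ is forbidden by $v_2$'s color, $v_3$ has at least two admissible colors. For each admissible color, I would delete $v_3$, update each $F_{u_i}$ to include the newly forbidden color (maintaining $|F_{u_i}| \leq 2$), and apply the induction hypothesis to the smaller near-triangulation whose outer cycle is $v_1v_2u_1\cdots u_s v_4\cdots v_k v_1$. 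A symmetric reduction is attempted at $v_{k-1}$. If either reduction succeeds for some admissible color, we are done. Otherwise every attempt returns a bad generalized multi-wheel in the reduced graph, and the task becomes to lift these obstructions back to a single generalized multi-wheel of $G$ containing the principal path $v_kv_1v_2$.

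The hard part is this final lifting step, where the richer combinatorics of generalized multi-wheels (as opposed to generalized wheels) becomes essential. When the bad generalized multi-wheel in the reduced graph uses some of the $u_i$ on its outer cycle, grafting $v_3$ back in, together with its edges to $u_1,\ldots,u_s$, is precisely the wheel-insertion operation of Definition \ref{defi:5}, so the resulting subgraph of $G$ is again a generalized multi-wheel. Verifying that every case really fits this pattern requires Lemma \ref{lemma3}(b) and Corollary \ref{cor:inner} to recognize local obstructions involving interior vertices adjacent to $v_1$, and Proposition \ref{prop:tau}, which forces the $\phi$-sums around each relevant triangle to vanish so that the combined obstruction is genuinely rigid in the group-theoretic sense required by Lemma \ref{lemma1}. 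It is precisely in this step that the proof diverges from a word-for-word adaptation of \cite{t2} and the new lemmas of Section~4 become indispensable.
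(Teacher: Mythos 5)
Your outline reproduces the overall frame of the paper's argument (induction, base case $k\le 5$ via Theorem \ref{thm:shortcycle}, elimination of separating triangles and chords), but two essential ingredients of that frame are missing. First, in the chord case $v_1v_j$ it is not enough to observe that both sides are obstructed and glue the two sub-obstructions along the chord: the glued subgraph certifies the exceptional outcome only if the chord endpoint $v_j$ has \emph{precisely} two forbidden colors, and a priori it may have fewer (or none, as the theorem only bounds $|F_{v_j}|$ from above). The paper closes this gap by a pigeonhole argument: if $L_{v_j}\setminus\{\tau_{v_1}(v_j)\}$ contained three colors, then by Theorem \ref{thm:2-ext} each side could be colored with two distinct choices for the color of $v_j$, and a common choice would color $G$. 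Your appeal to Lemma \ref{lemma1} at this point is misplaced: gluing along a principal edge is just the definition of generalized multi-wheels and needs no lemma, while Lemma \ref{lemma1} concerns multi-wheels and is used elsewhere (Claim \ref{claim3}). Second, you never address separating $4$-cycles, yet this is exactly where Lemma \ref{lemma2} enters the paper's proof (Claim \ref{claim2}): a separating $4$-cycle is replaced by an edge $e$, the bad generalized multi-wheel of the reduced graph must contain $e$, and Lemma \ref{lemma2} (colorability of $G'-e$) yields a coloring, a contradiction. You list Lemma \ref{lemma2} among your tools but never deploy it, and without the $4$-cycle claim the later structural steps (for instance the analogue of Claim \ref{claim4}, and the degree analysis of $v_3$ and $v_{k-1}$) do not go through.

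The serious gap is your final ``lifting'' step in the chordless main case. If, after deleting $v_3$ and augmenting the forbidden sets of its interior neighbours $u_1,\dots,u_s$, the reduced graph contains a bad generalized multi-wheel whose outer cycle uses some $u_i$, this cannot be lifted to the exceptional configuration for $G$: the theorem requires all non-principal outer vertices of the subgraph $G'$ to lie on $C$ and to have precisely two forbidden colors, whereas the $u_i$ are interior vertices of $G$ whose forbidden colors were added artificially during the reduction. Moreover, grafting $v_3$ back is not the wheel-insertion operation of Definition \ref{defi:5}: that operation inserts a new \emph{interior} hub together with new outer vertices replacing a single outer edge, whereas here $v_3$ is an outer vertex of $G$ and the $u_i$ become interior, so the grafted subgraph need not be a generalized multi-wheel at all. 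The paper resolves this situation in the opposite way: it proves that an interior vertex adjacent to $v_i$ and $v_j$ is adjacent to all intermediate outer vertices (Claim \ref{claim3}, using Lemma \ref{lemma1} and the trick of adding an edge with $\phi$-value $\alpha$), that no interior vertex sees both $v_2$ and $v_k$ (Claim \ref{claim4}), and then, via the degree analysis of $v_3$ and $v_{k-1}$ (Claims \ref{claim5}--\ref{claim7}), shows that any residual obstruction forces a structure that can be colored directly by Lemma \ref{lemma3} a) or Corollary \ref{cor:inner}; the contradiction in the main case is obtained by coloring $G$, not by exhibiting the exceptional subgraph. Without this structural analysis your induction does not close.
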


\begin{proof}
The proof is by induction on the number of vertices of $G$. 
For $k \leq 5$ the theorem follows from Theorem \ref{thm:shortcycle}.
So assume that $k>5$. Suppose for contradiction that the theorem is false, and let $G$ be a smallest counterexample.

\begin{claim} \label{claim1}
$C$ has no chord.
\end{claim} 

\begin{proof}
Suppose for contradiction that $v_i v_j$ is a chord of $C$, where $1 \leq j < i \leq k$. Then $v_i v_j$ divides $G$ into near-triangulations $G_1, G_2$, respectively. 
If $G_2$, say, does not contain $v_1$ then any $(\Z_5, \phi)$-coloring of $v_i v_j$ can be extended to $G_2$ by Theorem \ref{thm:2-ext}.
Therefore $G_1$ has no $(\Z_5, \phi)$-coloring. Now we apply the induction hypothesis to $G_1$ and obtain a contradiction. 
So assume that $j = 1$.

By Theorem \ref{thm:2-ext}, $G_2$ has a $(\Z_5, \phi)$-coloring. That coloring cannot be extended to $G_1$. The induction hypothesis implies implies that $G_1$ satisfies the conclusion of Theorem \ref{thm:3}, that is, $G_1$ contains a generalized multi-wheel. A similar argument shows that $G_2$ satisfies the conclusion of Theorem \ref{thm:3}. Thus $G$ contains a generalized multi-wheel.
It only remains to be proved that $L_{v_i}$ has only three available colors.
But if $L_{v_i} \setminus \{\tau_{v_1}(v_i)\}$ has a subset consisting of three colors, then, by Theorem \ref{thm:2-ext}, each of $G_1, G_2$ can be $(\Z_5, \phi)$-colored, and the color of $v_i$ can be chosen in two distinct ways (among these three colors) for each of $G_1, G_2$, since we get one coloring $c_1$ from Theorem \ref{thm:2-ext} and we get another coloring by replacing $c_1(v_i)$ by $\tau_{v_1}(v_i)$ in $L_{v_i}$.
Hence $G$ can be $(\Z_5, \phi)$-colored, a contradiction which proves Claim \ref{claim1}.
\end{proof}

\begin{claim} \label{claim2}
$G$ has no separating triangle and no separating 4-cycle.
\end{claim}

\begin{proof}
Suppose for contradiction that $G$ has a separating cycle $C'$ of length 3 or 4. 
We consider first the case where $C'$ has length 3. Delete $int(C')$ and  denote the resulting graph by $G'$. If $G'$ can be $(\Z_5, \phi)$-colored, then so can $G$ by Theorem \ref{thm:shortcycle}. 
So we may assume that $G'$ cannot be $(\Z_5, \phi)$-colored. Then $G'$ contains a generalized multi-wheel by the induction hypothesis, hence $G$ contains such a generalized multi-wheel, a contradiction.

We consider next the case where $C'$ has length 4. Choose $C'$ such that $int(C')$ is maximal. Replace $int(C')$ by a single edge $e$ and  denote the resulting graph by $G'$. If $G'$ can be $(\Z_5, \phi)$-colored, then so can $G$ by Theorem \ref{thm:shortcycle}. 
So we may assume that $G'$ cannot be $(\Z_5, \phi)$-colored. Then $G'$ contains a generalized multi-wheel satisfying the conclusion of Theorem \ref{thm:3} by the induction hypothesis. This generalized multi-wheel contains $e$ because we previously assumed that $G$ does not contain such a generalized multi-wheel.
The maximality property of $C'$ implies that $e$ is not contained in a separating triangle of $G'$. Then the first part of Claim \ref{claim2} implies that $G'$ has no separating triangles at all. 
So, if we delete the edge $e$ from $G'$, then the resulting graph can be $(\Z_5, \phi)$-colored by Lemma \ref{lemma2}. By Theorem \ref{thm:shortcycle}, $G$ can be $(\Z_5, \phi)$-colored, a contradiction which proves Claim \ref{claim2}.
\end{proof}

\begin{claim} \label{claim3}
If $u$ is a vertex in $int(C)$ which is joined to both $v_i,v_j$, where $2 \leq i \leq j-2 \leq k-2$, then $u$ is joined to each of $v_i, v_{i+1}, \ldots, v_j$.
\end{claim}

\begin{proof}
Suppose for contradiction that there exist $i',j'$ such that $i \leq i' \leq j' - 2 \leq j - 2$ and $u$ is joined to $v_{i'}, v_{j'}$, but not joined to any of $v_{i'+1}, v_{i'+2}, \ldots, v_{j'-1}$.
Let $C'$ be the cycle $u v_{i'} v_{i'+1} \cdots v_{j'} u$, and let $C''$ be the cycle $u v_{j'} v_{j'+1} \cdots v_k v_1 v_2 \cdots v_{i'} u$. 
We apply the induction hypothesis, first to $C'' \cup int(C'')$ and then to $C' \cup int(C')$. 
If $C' \cup int(C')$ is a generalized multi-wheel, then it is necessarily a multi-wheel, and then, by Lemma \ref{lemma1}, there exists $\alpha \in \Z_5$ such that all colorings of $v_{i'}, u, v_{j'}$ which cannot be extended to $G'$ satisfy $c(v_{j'}) - c(v_{i'}) = \alpha$.
So before we apply the induction hypothesis to $C'' \cup int(C'')$ we add the edge $v_{i'}v_{j'}$ and we let $\phi(v_{i'}v_{j'}) = \alpha$. 
Applying the induction hypothesis to this graph and then to $C' \cup int(C')$ either results in a $(\Z_5, \phi)$-coloring of $G$, hence we get a contradiction which proves Claim \ref{claim3}, or else we conclude that $C'' \cup int(C'') \cup \{v_{i'}v_{j'}\}$ contains a generalized multi-wheel satisfying the conclusion of Theorem \ref{thm:3}. 
This must contain the triangle $uv_{i'}v_{j'}u$ because of Claim \ref{claim1} and the assumption that no vertex on the outer cycle of the generalized multi-wheel has more than three available colors, and as $C' \cup int(C')$ is a multi-wheel we conclude that $G$ contains a generalized multi-wheel, a contradiction.
\end{proof}

\begin{claim} \label{claim4}
$G$ has no vertex in $int(C)$ which is joined to both $v_2$ and $v_k$.
\end{claim}

\begin{proof}
Suppose for contradiction that some vertex $u$ in $int(C)$ is joined to both $v_2$ and $v_k$. By Claim \ref{claim3}, $u$ is joined to all vertices of $C$ except possibly $v_1$. However, Claim \ref{claim2} implies that $u$ is joined to $v_1$, too.
Hence $G$ contains a spanning wheel. By Claim \ref{claim2}, $G$ is a wheel. If some vertex of $C$ has more than three available colors, then it is easy to $(\Z_5, \phi)$-color $G$. This contradiction proves Claim \ref{claim4}.
\end{proof}

\begin{claim} \label{claim5}
$v_3$ has degree at least 4.
\end{claim}

\begin{proof}
Suppose for contradiction that $v_3$ has degree at most 3. By Claim \ref{claim1}, $v_3$ has degree precisely 3, and $G$ has a vertex $u$ in $int(C)$ joined to $v_2, v_3, v_4$.
Let $i$ be the largest number such that $u$ is joined to $v_i$. 
The path $v_2 u v_i$ divides $G$ into two near-triangulations $G_1, G_2$ where $G_1$ contains $v_1$.
By Claims \ref{claim2} and \ref{claim3}, $G_2$ is a broken wheel. By Claim \ref{claim4}, $i<k$.

Now we use the argument of the proof of Theorem 1 in \cite{t1}.
We add to $F_u$ two colors of $\tau_{v_3}(L_{v_3} \setminus \tau_{v_2}(v_3), u)$.
We may assume that $G_1$ has no $(\Z_5, \phi)$-coloring under this assignment of forbidden sets. For otherwise, that coloring could be extended to $G - v_3$ and hence also to $G$.
Therefore the induction hypothesis implies that $G_1$ contains a generalized multi-wheel satisfying the conclusion of Theorem \ref{thm:3}. By Claims \ref{claim1} and \ref{claim2}, $G_1$ is a generalized multi-wheel. 

Claim \ref{claim4} implies that $G_1$ is not a multi-wheel. So $G_1$ has a chord. Claims \ref{claim1} and \ref{claim2} imply that this chord must be $v_1u$ and, since the chord is unique, $G_1-v_2$ is a multi-wheel. 
Claim \ref{claim3} then implies that all inner vertices of $G_1-v_2$ are joined to $u$. 
If $G_1-v_2$ is a wheel, then $G$ satisfies the assumption of Lemma \ref{lemma3} a) which implies that $G$ has a $(\Z_5, \phi)$-coloring, a contradiction. 
On the other hand, if $G_1-v_2$ is not a wheel, then we color $u$ such that $v_3$ still has two available colors. 
Corollary \ref{cor:inner} (applied to $G$ minus all those neighbors of $u$ that have degree 3) now implies that $G$ has a $(\Z_5, \phi)$-coloring, a contradiction which proves Claim \ref{claim5}.
\end{proof}

By a similar argument we get

\begin{claim} \label{claim6}
$v_{k-1}$ has degree at least 4.
\end{claim}

We now claim that

\begin{claim} \label{claim7}
$v_3$ and $v_{k-1}$ both have degree precisely 4, and $v_3$ and $v_{k-1}$ have a common neighbor in $int(C)$.
\end{claim}

\begin{proof}
Suppose for contradiction that Claim \ref{claim7} is false. Let $v_2, u_1, \ldots, u_q, v_4$ be the neighbors of $v_3$ in clockwise order. Then $q \geq 2$, by Claim \ref{claim5}.
Let $v_k, u'_1, \ldots, u'_q, v_{k-2}$ be the neighbors of $v_{k-1}$ in anti-clockwise order. Then $q' \geq 2$, by Claim \ref{claim6}. 
Let $u_iv_j$ be the unique edge such that $i$ is minimum and $j$ is maximum. 
By Claims \ref{claim2} and \ref{claim3}, $i=q$, and $j \leq k-2$. 
As in the proof of Claim \ref{claim5}, for each $1 \leq i \leq q$ we add to $F_{u_i}$ two colors of $\tau_{v_3}(L_{v_3} \setminus \tau_{v_2}(v_3), u_i)$.
And as in the proof of Claim \ref{claim5}, we conclude that $G-v_3$ contains a generalized multi-wheel $G'$.
By Claims \ref{claim1} and \ref{claim2}, we conclude that the outer cycle of this generalized multi-wheel $G'$ must be $C': v_1v_2u_1...u_qv_j...v_kv_1$. 
By Claim \ref{claim4}, $G'$ cannot be a multi-wheel (because every multi-wheel has a vertex in the interior joined to all three vertices of the principal path). 
So $G'$ has a chord. 
By Claims \ref{claim1} and \ref{claim2}, there can be only one chord, namely $v_1u_1$. As $C'': v_1u_1...u_qv_j...v_kv_1$ has no chord, it follows that $C''$  together with its interior is a multi-wheel which we call $G_1$. 
Then $int(C'')$ contains a vertex $v$ joined to all vertices of the principal path $v_kv_1u_1$ of $G_1$. 
By an analogous argument (with $v_{k-1}$ instead of $v_3$) we conclude that there exists a vertex $w$ joined to $v_2,v_1$ and a neighbor of $v_{k-1}$.
The only possibilities for $v,w$ are: $w=u_1$ and $v=u'_1$.
We now give $u_1$ a color such that $L_{v_3} \setminus \tau_{v_2}(v_3)$ still has at least two available colors. 
We delete $v_2$ and call the resulting graph $G_2$. 
By repeating the arguments given for $G_1$ above, we see that $G_2$ is also a multi-wheel.
If we apply Proposition \ref{prop:genmulti} to the triangle containing the edge $u'_1u'_2$ (but not the vertex $v_{k-1}$) in $G_2$, then we conclude that $u_1$ is joined to $u'_2$. 
Similarly, $u'_1$ is joined to $u_2$. 
This is possible only if $q=q'=2$ and $u_2=u'_2$. This contradiction proves Claim \ref{claim7}. 
\end{proof}

We are now ready for the final contradiction.
Using the proof of Claim \ref{claim7} we obtain the structure of $G'$: $u_1, u_1'$ and $u_2$ ($=u_2'$) are the only vertices in $int(C)$, $v_1$ is joined by an edge to $u_1$ and $u'_1$, and $u_1$ and $u'_1$ are joined by an edge.
By Claim \ref{claim3}, the cycle $u_2v_3v_4 \cdots v_{k-1}u_2$ together with its interior is a broken wheel. 
Define $G_2$ as in the proof of Claim \ref{claim7}. 
By Corollary 1, $G_2$ is colorable, and by the construction of $G_2$, this coloring can be extended to $G$.
We conclude that $G$ is $(\Z_5, \phi)$-colorable, a contradiction which completes the proof of Theorem \ref{thm:3}.
\end{proof}

\section{Further $\Z_5$-coloring properties of generalized multi-wheels}

\begin{lemma} \label{lemma4}
Let $G$ be a generalized multi-wheel, and let $\phi: E(G) \to \Z_5$.
Assume that the vertex $v_2$ is precolored, and for each $v \in C \setminus \{v_1, v_2\}$, $F_v$ is a set containing at most two forbidden colors. For all other vertices $v$, $F_v$ is empty.
Then it is possible to color $v_k$ such that any coloring of $v_1$ (introducing no color conflict with $v_2, v_k$) can be extended to a $(\Z_5, \phi)$-coloring $c: V(G) \to \Z_5$ of $G$ which satisfies $c(v) \notin F_v$ for any $v \in C \setminus \{v_1, v_2, v_k\}$.
\end{lemma}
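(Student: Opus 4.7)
The plan is to prove Lemma~\ref{lemma4} by induction on $|V(G)|$, case-splitting on the structural decomposition of $G$: either $G$ is a multi-wheel (including any wheel), or $G$ is obtained from a smaller generalized multi-wheel $G'$ by inserting a wheel into a triangle, or $G$ is a generalized wheel but not a multi-wheel, which forces it to be either a broken wheel or a gluing $G=G_1\cup G_2$ of two smaller generalized wheels along a common principal edge. A broken wheel on $k\geq 4$ vertices is itself expressible as such a gluing along its chord $v_1v_3$ (a triangle joined to a smaller broken wheel), so the gluing case subsumes the broken-wheel case and the triangle serves as the trivial base.

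If $G$ is a multi-wheel, Lemma~\ref{lemma1} produces $\alpha\in\Z_5$ such that every non-extendable triple $(c(v_k),c(v_1),c(v_2))$ satisfies $c(v_k)-c(v_2)=\alpha$; since $|L_{v_k}|\geq 3$ I choose $c(v_k)\in L_{v_k}$ avoiding $c(v_2)+\alpha$ (and also avoiding $\tau_{v_2}(v_k)$ in the degenerate case $k=3$), after which every valid $c(v_1)$ extends by Lemma~\ref{lemma1}. If $G$ is obtained from $G'$ by inserting a wheel (centre $w$, subdivision vertices $w_1,\ldots,w_j$) into a triangle $v_iuv_{i+1}$, I apply Lemma~\ref{lemma1} to the inserted wheel (principal path $v_iuv_{i+1}$) to obtain $\alpha$, then define $\phi''$ on $G'$ agreeing with $\phi$ except on the edge $v_iv_{i+1}$, where $\phi''$ is set so that the $\phi''$-constraint becomes exactly $c(v_i)-c(v_{i+1})\neq\alpha$. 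The induction hypothesis on $(G',\phi'')$ yields a valid $c(v_k)$, and by construction each resulting $(\Z_5,\phi'')$-coloring of $G'$ satisfies $c(v_i)-c(v_{i+1})\neq\alpha$ and hence extends through the inserted wheel to a $(\Z_5,\phi)$-coloring of $G$ by Lemma~\ref{lemma1}. This works even when $v_i$ or $v_{i+1}$ equals $v_2$ or $v_k$, because the edge $v_iv_{i+1}$ is not an edge of $G$.

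The genuinely delicate case is the gluing $G=G_1\cup G_2$ along a shared principal edge $v_1v_j$ with $v_k\in G_1$ and $v_2\in G_2$. I first apply the induction hypothesis to $G_2$ (principal path $v_jv_1v_2$, $v_2$ precoloured) to obtain $c(v_j)\in L_{v_j}$ such that every $c(v_1)$ valid with respect to $\{v_2,v_j\}$ extends to $G_2$; then with $v_j$ precoloured the induction hypothesis applied to $G_1$ (principal path $v_kv_1v_j$, with $v_j$ playing the role of $v_2$) yields $c(v_k)\in L_{v_k}$ such that every $c(v_1)$ valid with respect to $\{v_j,v_k\}$ extends to $G_1$. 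This directly handles every $c(v_1)$ simultaneously valid with respect to $\{v_2,v_j,v_k\}$; the sole remaining value is $c(v_1)=\tau_{v_j}(v_1)$ (when it is valid with respect to $\{v_2,v_k\}$), and for this $c(v_1)$ I switch to an alternative $c(v_j)\in L_{v_j}\setminus\{\tau_{v_1}(v_j)\}$ (at least two such values exist since $|L_{v_j}|\geq 3$) and re-invoke Theorem~\ref{thm:3} and Lemma~\ref{lemma2} on each half. The principal obstacle is precisely this residual step: verifying that some alternative choice of $c(v_j)$ simultaneously admits extensions to both $G_1$ and $G_2$ under the inductive control already established; resolving it requires careful use of the generalized multi-wheel structure of $G_1,G_2$ together with Theorem~\ref{thm:3} and Lemma~\ref{lemma2}.
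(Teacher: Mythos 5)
Your multi-wheel case and your inserted-wheel case are sound and essentially coincide with the paper's method (Lemma~\ref{lemma1} plus encoding the non-extendable difference as a $\phi$-value on a virtual edge and inducting on the reduced generalized multi-wheel). The genuine gap is your gluing case, and it is exactly the crux of the lemma. Your two-step induction first commits to a color $c(v_j)$ (from the induction hypothesis on $G_2$) and then chooses $c(v_k)$ as a function of that particular $c(v_j)$ (from the induction hypothesis on $G_1$ with $v_j$ precolored). For the residual value $c(v_1)=\tau_{v_j}(v_1)$ you must recolor $v_j$, but the induction hypothesis applied to $G_1$ only guarantees that for each precoloring of $v_j$ there exists \emph{some} good color of $v_k$ --- possibly a different one for each choice of $c(v_j)$ --- whereas the conclusion of Lemma~\ref{lemma4} forces you to keep the single $c(v_k)$ you already committed to. The tools you invoke to repair this do not apply: $G_1$ with three-element lists on its boundary is itself a generalized multi-wheel, i.e.\ precisely the exceptional configuration of Theorem~\ref{thm:3}, so Theorem~\ref{thm:3} gives no extension, and Lemma~\ref{lemma2} only produces colorings of $G$ minus an edge, which is not what is needed. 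The problem is aggravated by your decision to fold broken wheels into the gluing case: for broken wheels no Lemma~\ref{lemma1}-type constant $\alpha$ exists (the paper notes this explicitly right after Lemma~\ref{lemma1}), so there is no way to encode the $G_2$-side condition as a $\phi$-value and no obvious way to regain simultaneous control of both halves.

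The paper avoids this trap by never precoloring an internal boundary vertex such as $v_j$. When $G$ is not a multi-wheel and not a broken wheel, it peels off a \emph{multi-wheel} block $C'\cup int(C')$ attached to $v_1$ by two edges $v_1v_i, v_1v_j$, replaces it by the triangle $v_1v_iv_jv_1$ with $\phi(v_iv_j)=\alpha$ from Lemma~\ref{lemma1}, and applies induction to the reduced graph, in which $v_k,v_1,v_2$ all survive; a union of consecutive broken-wheel blocks is again a broken wheel, so the only remaining case is that $G$ itself is a broken wheel. That base case is then handled by a dedicated pigeonhole argument, not by further decomposition: normalize $\phi(v_1v_i)=0$, take three colors $\gamma,\delta,\epsilon\in L_{v_k}$, observe that any ``bad'' color of $v_1$ must lie in the two-element set $L_{v_3}\setminus\{\tau_{v_2}(v_3)\}$, so two of the three choices at $v_k$ share the same bad color at $v_1$, and this contradicts Theorem~\ref{thm:2-ext} applied to $G$ with $v_1$ given that color and $v_k$ given the corresponding three-element list. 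You would need to supply an argument of this kind (or restructure your case analysis as the paper does) to close your residual step; as written, the proposal leaves the decisive point unproved.
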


\begin{proof}
We prove Lemma \ref{lemma4} by induction on the number of vertices of $G$.

If $G$ is a multi-wheel, then Lemma \ref{lemma4} follows easily from Lemma \ref{lemma1}.
Assume that $G$ is a generalized multi-wheel, but not a multi-wheel or a broken wheel. 
Then there exist $2 \leq i \leq j \leq k$ such that $G$ contains the edges $v_1v_i, v_1v_j$, and the cycle $C' := v_1 v_i v_{i+1} \cdots v_j v_1$ and its interior form a multi-wheel. 
Let $G' := C' \cup int(C')$, and let $G''$ be the graph obtained from $G$ by replacing $G'$ by a triangle $v_1 v_i v_j v_1$ (with $v_iv_j$ directed towards $v_j$). 
By Lemma \ref{lemma1} there exists $\alpha \in \Z_5$ such that all colorings of $v_{i}, v_1, v_{j}$ which cannot be extended to $G'$ satisfy $c(v_{j}) - c(v_{i}) = \alpha$.
Apply the induction hypothesis to $G''$ where we define $\phi(v_iv_j) = \alpha$. Then the resulting coloring can be extended to $G'$ by Lemma \ref{lemma1}, hence Lemma \ref{lemma4} follows.

So we can assume that $G$ is a broken wheel.
In particular, $v_1$ is joined to $v_3$. 
By Proposition \ref{prop:newphi} we may assume that $\phi(v_1v_i) = 0$ for each $2 \leq i \leq k$.
Let $\alpha, \beta$ be two colors in $L_{v_3} \setminus \{\tau_{v_2}(v_3)\}$.
Let $\gamma, \delta, \epsilon$ be three colors in $L_{v_k}$. 
Suppose for contradiction that for each of these three colors it is possible to color $v_1$ such that the coloring cannot be extended to $G$.
The color at $v_1$ must be one of $\alpha,\beta$, since otherwise the coloring can be extended by Theorem \ref{thm:2-ext} applied to $G-v_2$.
So for two of the colors, $\gamma, \delta, \epsilon$, say $\gamma, \delta$, it is the same color, say $\alpha$, which is used at $v_1$.
But now we get a contradiction to Theorem \ref{thm:2-ext} applied to $G$, where $v_1$ has the color $\alpha$, and $v_k$ has the available colors $\alpha, \gamma, \delta$.

This completes the proof of Lemma \ref{lemma4}.
\end{proof}

We define \textit{generalized wheel strings}, \textit{clean vertices}, and \textit{broken wheel strings} as in \cite{t2}:
If $G_1, G_2, \ldots, G_m$ are generalized wheels, then we define a \textit{generalized wheel string} by identifying each principal neighbor of the major vertex in $G_i$ with precisely one principal neighbor of the major vertex in $G_{i-1}, G_{i+1}$, respectively, for $i = 2, \ldots, k-1$. 
The two vertices which are principal neighbors of the major vertices in $G_1$ and $G_m$, respectively, and which have not been identified with any other vertex, are called \textit{clean vertices}.
If each of $G_1, G_2, \ldots, G_m$ is a broken wheel, then $G$ is a \textit{broken wheel string}.

We extend the first of these definitions as follows:

\begin{defi}
Let $G_1, G_2, \ldots, G_m$ be generalized multi-wheels. 
We define a \textit{generalized multi-wheel string} by identifying each principal neighbor of the major vertex in $G_i$ with precisely one principal neighbor of the major vertex in $G_{i-1}, G_{i+1}$, respectively, for $i = 2, \ldots, k-1$. 
\end{defi}

Given a generalized multi-wheel string, we now extend the definition of a \textit{clean vertex} to be the principal neighbors of the major vertices in $G_1$ and $G_m$ which have not been identified with any other vertex. 

\begin{lemma} \label{lemma5}
Let $G$ be a generalized multi-wheel string, and let $\phi: E(G) \to \Z_5$. 
Assume that the two clean vertices have forbidden sets containing at most three colors each, and that each non-clean vertex on the outer boundary has a forbidden set containing at most two colors. For all other vertices $v$, $F_v$ is empty.
Then it is possible to color the two clean vertices and all the cutvertices of $G$ such that any coloring of the major vertices (introducing no color conflict) can be extended to a $(\Z_5, \phi)$-coloring $c: V(G) \to \Z_5$ of $G$ which satisfies $c(v) \notin F_v$ for all $v \in V(G)$.
\end{lemma}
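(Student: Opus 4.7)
The plan is to prove Lemma 5 by induction on $m$, the number of generalized multi-wheels $G_1, G_2, \ldots, G_m$ composing the string $G$. Write $v_1^{(i)}$ for the major vertex of $G_i$, and denote the identified principal neighbors by $c_1, c_2, \ldots, c_{m+1}$, so that $c_i$ and $c_{i+1}$ are the two principal neighbors of $v_1^{(i)}$. Then $c_1$ and $c_{m+1}$ are the two clean vertices (each with $|L| \geq 2$), while $c_2, \ldots, c_m$ are the cutvertices of $G$ (non-clean outer vertices, so $|L| \geq 3$).

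For the base case $m=1$, $G=G_1$ is a single generalized multi-wheel and I must find $c(c_1) \in L_{c_1}$ and $c(c_2) \in L_{c_2}$ so that every coloring of $v_1^{(1)}$ consistent with these extends through $G_1$. Lemma 4 applied to $G_1$ (with $c_2$ playing the role of the precolored $v_2$) gives, per choice of $c(c_2) \in L_{c_2}$, a color $c(c_1) \in \Z_5$ with the desired property; the catch is that this color need not lie in $L_{c_1}$. I would revisit the proof of Lemma 4 to extract a quantitative bound on the bad set: Lemma 1 contributes at most one bad $c(c_1)$ per multi-wheel piece of the generalized-wheel decomposition (bad values lying on the affine line $c(c_1)-c(c_2)=\alpha$), and the direct broken-wheel argument inside Lemma 4 contributes at most two. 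Combining these bounds with the two-fold freedom at both $c_1$ and $c_2$, and a short case split on whether $G_1$ is a multi-wheel, a broken wheel, or a genuine generalized multi-wheel, should produce a good pair in $L_{c_1} \times L_{c_2}$.

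For the inductive step $m \geq 2$, I would split $G = G' \cup G_m$ where $G' = G_1 \cup \cdots \cup G_{m-1}$ meets $G_m$ only in the vertex $c_m$. In $G'$ the vertex $c_m$ is now a clean vertex, and its $|L_{c_m}| \geq 3$ (inherited from being a cutvertex of $G$) amply satisfies the clean-vertex hypothesis $|L| \geq 2$. The induction hypothesis applied to $G'$ then colors $c_1, c_2, \ldots, c_m$ so that every coloring of $v_1^{(1)}, \ldots, v_1^{(m-1)}$ compatible with this coloring extends through $G'$. It only remains to pick $c(c_{m+1}) \in L_{c_{m+1}}$ so that every consistent coloring of $v_1^{(m)}$ extends through $G_m$, and Lemma 4 applied to $G_m$ with $c_m$ precolored supplies a candidate.

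The principal obstacle, and the reason the induction is delicate, is that Lemma 4 only guarantees an output $c(c_{m+1}) \in \Z_5$, not one in $L_{c_{m+1}}$. When $|L_{c_{m+1}}|$ is as small as $2$, the two bad values for $c(c_{m+1})$ could in principle exhaust $L_{c_{m+1}}$ and leave no legal choice. To get around this, I would strengthen the induction hypothesis so that the coloring of $G'$ delivered by IH can actually be realized with at least two different colors at $c_m$; varying $c(c_m)$ over these options shifts the bad set that Lemma 4 forbids at $c_{m+1}$ (again controlled by Lemma 1 in multi-wheel pieces and by the broken-wheel pigeonhole), and a counting argument should then guarantee that some choice of $c(c_m)$ leaves a legal color in $L_{c_{m+1}}$. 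Propagating this ``residual freedom at the active cutvertex'' invariant through every step of the induction, and verifying it already in the base case, is what I expect to be the hardest part of the argument.
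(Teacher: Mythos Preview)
Your overall plan is correct and structurally close to the paper's proof: both arguments split at a cutvertex, need ``two choices'' there, and pigeonhole the two sides together. Two points deserve comment.

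First, you do not need a strengthened induction hypothesis. The paper inducts on $|V(G)|$ rather than on $m$. For $m \geq 2$ it splits off $G_1$ at $z = c_2$ and exploits the slack in list sizes: $z$ is a cutvertex of $G$, so $|L_z| \geq 3$, but as a clean vertex of $G_1$ the hypothesis only requires $|L_z| \geq 2$. Hence one may apply the induction hypothesis to $G_1$ once, record the resulting color $\alpha$ at $z$, then apply it \emph{again} with $\alpha$ deleted from $L_z$. This yields two good colorings of the $G_1$-side with distinct colors at $z$; do the same on the $G_2 \cup \cdots \cup G_m$ side; with only three colors in $L_z$, two-plus-two must overlap. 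This sidesteps carrying any ``residual freedom'' invariant through the induction, and your induction on $m$ would work the same way once you adopt this trick.

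Second, your plan for $m=1$ to ``revisit Lemma 4 and extract a quantitative bound on the bad set'' is too optimistic. For a multi-wheel, Lemma 1 does the job immediately. For a generalized multi-wheel that is neither a multi-wheel nor a broken wheel, the paper contracts an interior multi-wheel piece to a single edge (assigning $\phi$ on that edge via Lemma 1) and recurses on a strictly smaller graph --- here induction on $|V(G)|$ is what makes the reduction go through, whereas induction on $m$ does not. The broken-wheel case, however, is genuinely the hard core of Lemma 5 and is not a bookkeeping consequence of Lemma 4. The paper gives a direct argument: normalize $\phi$ via Proposition \ref{prop:newphi} so that all spokes and the two end edges carry $\phi = 0$; assume all four pairs in $L_{c_1} \times L_{c_2}$ fail; show that any ``bad'' color at the major vertex must lie in every $L_{v_i}$, forcing $L_{v_3} = \cdots = L_{v_{k-1}}$ and then $L_{c_1} = L_{c_2}$; and finish with an explicit recoloring that swaps two colors along the rim. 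You should expect to have to do something at this level of detail rather than hope the Lemma 4 proof already contains the bound you need.
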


\begin{proof}
We prove Lemma \ref{lemma5} by induction on the number of vertices of $G$. Suppose for contradiction that $G$ is a smallest counterexample.

Let $G$ consist of the generalized multi-wheels $G_1, \ldots, G_m$ such that a principal neighbor of each of the major vertices in $G_i$ and $G_{i+1}$ are identified. 
Consider first the case where $m \geq 2$. 
Let $x$ (repectively $y$) be the clean vertex in $G_1$ (respectively $G_m$).
Let $z$ be the common vertex of $G_1$ and $G_2$.
Assume that $L_z = \{\alpha, \beta, \gamma\}$.
We now apply the induction hypothesis to $G_1$. We may assume that $x,z$ can be colored such that the conclusion of Lemma \ref{lemma5} holds.
Assume that the color of $z$ is $\alpha$. Then we again apply the induction hypothesis to $G_1$ but now we only allow colors $\beta, \gamma$ at $z$.
So the coloring of $x,z$ can be chosen in two ways in which $z$ has two distinct colors. 
Applying the induction hypothesis to $G_2 \cup \ldots \cup G_m$ there are two distinct colorings of $z,y$ (with $z$ getting different colors) such that the conclusion of Lemma \ref{lemma5} holds.
Now we let $z$ receive a color that appears in both a coloring of $x,z$ and a coloring of $y,z$.
So we may assume that $m=1$.

Let $x = v_2$, $y = v_k$ be the clean vertices in $G_1 = G$.
Assume first that $G$ is a generalized multi-wheel, but not a multi-wheel or a broken wheel. 
Then there exist $2 \leq i \leq j \leq k$ such that $G$ contains the edges $v_1v_i, v_1v_j$, and the cycle $C' := v_1 v_i v_{i+1} \cdots v_j v_1$ and its interior form a multi-wheel. 
Let $G' := C' \cup int(C')$, and let $G''$ be the graph obtained from $G$ by replacing $G'$ by a triangle $v_1 v_i v_j v_1$ (with $v_iv_j$ directed towards $v_j$). 
By Lemma \ref{lemma1} there exists $\alpha \in \Z_5$ such that the $(\Z_5, \phi)$-colorings of $v_j, v_1, v_i$ which cannot be extended to $G'$ satisfy that $c(v_j) - c(v_i) = \alpha$.
Apply the induction hypothesis to $G''$ where we define $\phi(v_iv_j) = \alpha$. Then the resulting coloring can be extended to $G'$ by Lemma \ref{lemma1}, hence Lemma \ref{lemma4} follows in the case where $G$ is a generalized multi-wheel, but not a multi-wheel and not a broken wheel.

So we can assume that $G$ is either a multi-wheel or a broken wheel.
We may assume that $G$ is a broken wheel since otherwise Lemma \ref{lemma5} follows easily from Lemma \ref{lemma1}.
By Proposition \ref{prop:newphi} we may assume that $\phi(v_1v_i) = 0$ for any $3 \leq i \leq k-1$, and also $\phi(v_2v_3) = \phi(v_{k-1}v_k) = 0$.
Furthermore, we may assume that all vertices $v_3, \ldots, v_{k-1}$ have precisely three available colors, since otherwise it is easy to see that any coloring of $v_2,v_k,v_1$ can be extended.
Let $L_{v_3} = \{\alpha, \beta, \gamma\}$, let $L_{v_2} = \{\alpha', \beta'\}$, and let $L_{v_k} = \{\alpha'', \beta''\}$.
Now, there are four possible ways of coloring $v_2, v_k$.
We may assume that none of them works, that is, for each of those four colorings, it is possible to color $v_1$ (introducing no color conflict with $v_2, v_k$) such that the resulting coloring cannot be extended to $G$. We say that these colors are the \textit{bad colors} of $v_1$.
Any bad color of $v_1$ must be in $L_{v_i}$ for each $3 \leq i \leq k-1$, since otherwise we color $v_3, \ldots, v_{i-1}, v_{k-1}, \ldots, v_i$ in that order.
In particular, the bad colors are among $\{\alpha, \beta, \gamma\}$.
Thus, for at least two of the four possibilities, $v_1$ has the same bad color, say $\gamma$. 
The two possibilities must either be $\alpha', \beta''$ and $\beta', \alpha''$ or $\alpha', \alpha''$ and $\beta', \beta''$, since otherwise (if $\beta'$, say, does not appear here) we apply Theorem \ref{thm:2-ext} to $G$ with $v_1, v_2$ colored $\gamma, \alpha'$, respectively, and $L_{v_k} = \{\alpha'', \beta'', \tau_{v_1}(v_k)\}$ to get a contradiction. 
Assume without loss of generality that the colorings $\alpha', \beta'', \gamma$ and $\beta', \alpha'', \gamma$ of $v_2, v_k, v_1$, respectively, cannot be extended to $G$.
The same argument shows that $\gamma$ cannot be the bad color of $v_1$ in three of the four possibilities.
We shall now argue that $\{\alpha', \beta'\} = \{\alpha, \beta\}$: If we give $v_1$ color $\gamma$ and $v_k$ color $\alpha''$ and then color $v_{k-1}, v_{k-2}, \ldots, v_4$ in that order, then the color at $v_3$ will be either $\alpha$ or $\beta$, say $\alpha$. 
If $\alpha$ is not in $\{\alpha', \beta'\}$, then $G$ is colorable with $v_1$ having color $\gamma$, a contradiction. 
If we next give $v_1$ color $\gamma$ and $v_k$ color $\beta''$, then the same argument implies that $\beta$ is in $\{\alpha', \beta'\}$. 
(If we have any choices while coloring $v_{k-1}, v_{k-2}, \ldots, v_4$, then it is easy to see that $G$ is colorable with $v_1$ having color $\gamma$, a contradiction. Thus $\beta$ must be the available color at $v_3$ when $v_4$ has been colored.) 
We choose the notation such that $\alpha' = \alpha$ and $\beta' = \beta$.
Now, if, say, $\beta$ is not a bad color of $v_1$ (that is, $\alpha$ and $\gamma$ are the only bad colors), then the coloring $\alpha, \alpha'', \alpha$ of $v_2, v_k, v_1$ does not extend to $G$, which gives a contradiction when we color $v_{k-1}, v_{k-2}, \ldots, v_3$ in that order ($v_3$ can be colored since $v_1, v_2$ have the same color and $\phi(v_1v_3) = \phi(v_2v_3) = 0$). 
Thus the colorings $\alpha, \alpha'', \beta$ and (by a similar argument) $\beta, \beta'', \alpha$ of $v_2, v_k, v_1$ do not extend to $G$, and since $\alpha, \beta, \gamma$ are all bad colors of $v_1$ we conclude $L_{v_3} = L_{v_4} = \ldots = L_{v_{k-1}} = \{\alpha, \beta, \gamma\}$ by an observation made earlier.
As above, we conclude that $\{\alpha'', \beta''\} = \{\alpha, \beta\}$. So $v_2$ and $v_k$ have the same available colors, namely $\alpha, \beta$.

Recall that none of the colorings $\alpha, \beta'', \gamma$ and $\alpha, \alpha'', \beta$ of $v_2, v_k, v_1$ extend to $G$.
We shall now obtain a contradiction by proving that at least one of them extends to $G$.
To prove this, let us now color $v_2, v_k, v_1$ by $\alpha, \beta'', \gamma$. 
Then we color $v_3, \ldots, v_{k-1}$ in that order according to the following rule: if $v_{i-1}$ has color $c(v_{i-1})$ and $\phi(v_{i-1}v_i) \neq 0$ then we give $v_{i}$ color $c(v_{i-1})$, and if $\phi(v_{i-1}v_i) = 0$ then we give $v_{i}$ the other available color (which is $\alpha$ or $\beta$) for $3\leq i \leq k-1$. 
Since the coloring of $v_2, v_k, v_1$ does not extend, we must get a color conflict between $v_{k-1}$ and $v_k$, that is, the color of $v_{k-1}$ is that of $v_k$ (since $\phi(v_{k-1}v_k) = 0$), that is, $\beta''$. 
If this happens, we let the colors of $v_2, v_k, v_1$ be $\alpha, \alpha'', \beta$ and now we give all vertices in $v_3, \ldots, v_{k-1}$ which have color $\beta$ color $\gamma$ instead.
This coloring clearly works.
\end{proof}

\section{Exponentially many $\Z_5$-colorings of planar graphs}

In this section we prove the main result. The proof follows closely the analogous proof in \cite{t2}.

\begin{thm} \label{thm:main}
Let $G$ be an oriented plane near-triangulation with outer cycle $C: v_1v_2\cdots v_kv_1$, and let $\phi: E(G) \to \Z_5$. 
For each vertex $v$ in $G$ let $F_v$ be a set of forbidden colors.
Assume that the vertices $v_k, v_1, v_2$ or the vertices $v_1, v_2$ are precolored.
If $v$ is one of $v_3, v_4, \ldots, v_{k-1}$ (resp. $v_3, v_4, \ldots, v_{k}$), then $F_v$ consists of at most two colors. For all other vertices $v$, $F_v$ is empty.
Let $n$ denote the number of non-precolored vertices, and let $r$ denote the number of vertices with precisely three available colors. 
Assume that $G$ has a $(\Z_5, \phi)$-coloring $c: V(G) \to \Z_5$ which satisfies $c(v) \notin F_v$ for any $v \in V(G)$. Then the number of such $(\Z_5, \phi)$-colorings is at least $2^{n/9-r/3}$, unless $G$ has three precolored vertices and also contains a vertex $u$ with precisely four available colors which is joined to the three precolored vertices and has only one available color distinct from $\tau_{v_k}(u), \tau_{v_1}(u), \tau_{v_2}(u)$.
\end{thm}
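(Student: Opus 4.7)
The plan is to prove the theorem by strong induction on $n$, following the structure of the analogous Theorem 4 in \cite{t2}. The base case $n=0$ is immediate: the precoloring is the unique $(\Z_5, \phi)$-coloring and $2^{-r/3} \leq 1$.

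For the inductive step I would first reduce the two-precolored case to the three-precolored case. Since $v_k \in C \setminus \{v_1, v_2\}$ has $|F_{v_k}| \leq 2$, we have $|L_{v_k}| \geq 3$. If $|L_{v_k}| = 3$, then $v_k$ contributes to $r$; selecting each of the (at least two) valid choices of $c(v_k)$ avoiding $\tau_{v_1}(v_k)$ and applying induction to each resulting three-precolored instance (with parameters $n' = n-1$, $r' = r-1$) yields $\geq 2 \cdot 2^{(n-1)/9 - (r-1)/3}$ colorings of $G$, which exceeds $2^{n/9 - r/3}$ since $1 - 1/9 + 1/3 > 0$. If $|L_{v_k}| \geq 4$, then $v_k \notin r$ and the at least three valid choices of $c(v_k)$ give $3 \cdot 2^{(n-1)/9 - r/3} > 2^{n/9 - r/3}$. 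A small separate check shows that at most one of these choices instantiates the exceptional configuration, so the remaining non-exceptional choices suffice.

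In the three-precolored case, assuming no exceptional $u$, I would mirror the case analysis of Theorem \ref{thm:3}. First handle any chord of $C$ by splitting $G$ along it and applying induction to both sides, glueing via Theorem \ref{thm:2-ext} (with the chord vertex taking a compatible color on both sides). Next dispose of separating triangles and separating $4$-cycles by Theorem \ref{thm:shortcycle} together with Lemma \ref{lemma2}. Finally analyze the local structure at $v_3$ and symmetrically at $v_{k-1}$ exactly as in the proof of Theorem \ref{thm:3}: either low-degree neighbors of the principal path permit direct two-way branching on an inner vertex, or $G$ contains a generalized multi-wheel string attached to $v_k v_1 v_2$. In the latter case, Lemma \ref{lemma5} produces two distinct extendable colorings of the clean vertices and the cutvertices of this string, and deleting the string leaves a smaller near-triangulation to which the induction hypothesis applies.

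The main obstacle will be calibrating the factor-of-$2$ gain per roughly $9$ consumed vertices against the $-r/3$ penalty incurred whenever a vertex with exactly three available colors is consumed. The tightest subcase occurs when the generalized multi-wheel string identified near the principal path is short and consists largely of three-color vertices; here Lemma \ref{lemma5} yields precisely the two colorings that the bound demands, and the $-r/3$ accounting is exactly tight. The exceptional configuration in the theorem statement is exactly the structure that blocks the argument: an interior vertex $u$ adjacent to all three precolored vertices with $|L_u| = 4$, three of whose colors are pinned by $\tau_{v_k}(u), \tau_{v_1}(u), \tau_{v_2}(u)$, so that only one color is admissible at $u$ and no branching is possible. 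The algebraic input of Proposition \ref{prop:tau} and Lemma \ref{lemma4}, which is what distinguishes this group-coloring argument from its list-coloring counterpart in \cite{t2}, is what rules out this pinning in all other situations.
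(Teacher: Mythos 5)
Your outline follows the same broad strategy as the paper (induction on $n$, reduction of the two-precolored case by coloring $v_k$, chord/separating-triangle reductions, and then a structural analysis near the principal path using Lemmas \ref{lemma4} and \ref{lemma5}), but it stops short of the parts where the actual work lies, and two of the steps you do assert are not correct as stated. First, in your reduction of the two-precolored case you claim there are ``at least two valid choices'' of $c(v_k)$, each yielding a colorable, non-exceptional three-precolored instance; the hypothesis only guarantees one extendable choice (the value of $v_k$ in some extension of the $v_1,v_2$-precoloring), and nothing in your argument rules out that this new instance is the exceptional configuration of Theorem \ref{thm:main}. The paper avoids this by taking a minimal counterexample that is extremal also in $r$ (and then in the number $f$ of four-list vertices), proving \emph{before} the $v_k$-reduction that every non-precolored vertex of $C$ has exactly three available colors (Claim \ref{claim10}, via the $4\cdot 2^{-1/3}/3>1$ overcounting trick of choosing a $3$-subset of a $4$-list in four ways); that claim is what makes the exceptional case impossible after coloring $v_k$, and you have no substitute for it. Second, Lemma \ref{lemma5} produces \emph{one} coloring of the clean vertices and cutvertices, not two; the factor-of-two branchings in the paper come from Lemma \ref{lemma4} in the subcase where consecutive blocks of the broken wheel string are triangles, giving $2^t$ colorings with $t$ the number of triangle blocks.

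More fundamentally, the quantitative bookkeeping that you defer to ``calibrating the factor-of-$2$ gain'' is precisely the content of the proof, and it does not close itself. In the paper, Case 1 (no interior path $v_2u_1\cdots u_qv_k$ spanning a generalized multi-wheel with principal path $v_kv_1v_2$) is proved with the \emph{strengthened} bound $2^{(n+1)/9-r/3}$, using the inequalities $n'=n-r$, $r'\le (2r-1)/3$ in Subcase 1.1 and $r'=s+t$, $r\ge 2s+t+1$ in Subcase 1.2; this extra $2^{1/9}$ is then spent in Case 2, which is reduced to Case 1 by deleting $v_{k-1}$, and in the final subcase the deficit is recovered by deleting one available color of $u_{q-1}$ in five ways, gaining $5/4>2^{1/9}$. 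None of this (nor the use of Lemma \ref{lemma1} to replace an inner multi-wheel by a triangle carrying $\phi(v_{i'}v_{j'})=\alpha$, which is the group-theoretic step underlying the analogue of Claim \ref{claim12}) appears in your sketch, and without it the induction does not yield $2^{n/9-r/3}$ in the tight configurations you yourself identify. So the proposal is a reasonable roadmap in the same spirit as the paper, but as a proof it has genuine gaps at exactly the decisive points.
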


\begin{proof}
The proof is by induction on $n$. It is easy to verify the statement if $n = 1$ so we proceed to the induction step.
Let $f$ denote the number of vertices with precisely four available colors. 

We assume that $G$ is a counterexample such that $n$ is minimum and, subject to this, $r$ is maximal, and, subject to these conditions, $f$ is minimum.
We shall establish a number of properties of $G$ which will lead to a contradiction.
Clearly, $n > 3r$.

\begin{claim} \label{claim8}
$G$ has no separating triangle.
\end{claim}

\begin{proof}
Suppose for contradiction that $xyzx$ is a separating triangle which divides $G$ into near-triangulations $G_1, G_2$, respectively, where $G_1$ contains $C$.
Then any $(\Z_5, \phi)$-coloring of $x,y,z$ can be extended to $G_2$ by Theorem \ref{thm:shortcycle}.
Let $n_1$ be the number of non-precolored vertices in $G_1$, and let $n_2$ be the number of vertices in $G_2-x-y-z$.
By the minimality of $n$, $G_1$ has at least $2^{n_1/9-r/3}$ distinct $(\Z_5,\phi)$-colorings. Each such coloring has at least $2^{n_2/9}$ extensions to $G_2$.
As $n_1 + n_2 = n$, this proves Claim \ref{claim8}. 
\end{proof}

\begin{claim} \label{claim9}
$G$ has no chord.
\end{claim}

\begin{proof}
Suppose for contradiction that $v_iv_j$ is a chord of $C$, where $1 \leq i < j \leq k$. Then $v_iv_j$ divides $G$ into near-triangulations $G_1, G_2$, respectively.

Consider first the case where $G_2$, say, does not contain a precolored vertex distinct from $v_i,v_j$.
Then any $(\Z_5, \phi)$-coloring of $G_1$ can be extended to $G_2$ by Theorem \ref{thm:2-ext}.
We now obtain a contradiction by repeating the proof of Claim \ref{claim8}.

Assume next that $i = 1$ and that $v_k$ is precolored.
If each of $G_1, G_2$ is a generalized multi-wheel such that each non-precolored vertex on the outer cycle has precisely three available colors, then $r \geq n/3$, and there is nothing to prove.
So assume that $G_2$, say, is not such a generalized multi-wheel.
Moreover, it does not contain such a generalized multi-wheel because $G$ has no separating triangles, by Claim \ref{claim8}, and every chord of $G$, if any, is incident with $v_1$, by the first part of the proof of Claim \ref{claim9}. 
Now, if $j < k-1$ we repeat the proof of Claim \ref{claim8}. This proves Claim \ref{claim9} unless $j=k-1$, that is, $G_2$ is the triangle $v_1v_kv_{k-1}v_1$. 
So assume that this is the case.

Then we color $v_{k-1}$, and we apply the induction hypothesis to $G-v_k$. 
If $v_{k-1}$ has precisely three available colors, then both $n$ and $r$ decreases, so Claim \ref{claim9} follows.
If $v_{k-1}$ has at least four available colors, then only $n$ decreases, but there are at least two choices for the color of $v_{k-1}$ unless $G$ is the exceptional case at the end of Theorem \ref{thm:main}. 
So we need only consider the case where $G-v_k$ is the exceptional case at the end of Theorem \ref{thm:main}, namely that $G$ has a vertex with precisely four available colors joined to $v_{k-1}, v_1, v_2$.
Then $k=5$, and $n=2$. As $v_3$ has at least four available colors, $G$ has at least two $(\Z_5, \phi)$-colorings. This proves Claim \ref{claim9}.
\end{proof}

\begin{claim} \label{claim10}
Each non-precolored vertex on $C$ has precisely three available colors.
\end{claim}

\begin{proof}
Suppose for contradiction that Claim \ref{claim10} is false. 
Select a set $S$ of four available colors in $L_{v_i}$ for some vertex $v_i$ on $C$.
Let $S'$ be one of the four 3-element subsets of $S$. 
Now replace $F_{v_i}$ by $\Z_5 \setminus S'$. 
By the maximality of $r$, the new $G$ has at least $2^{n/9-(r+1)/3}$ distinct $(\Z_5,\phi)$-colorings.
As $S'$ can be chosen in four ways, this results in $4 \cdot 2^{n/9-(r+1)/3}$ $(\Z_5,\phi)$-colorings and each of these is counted three times.
Thus we get at least $4 \cdot 2^{n/9-(r+1)/3}/3$ distinct $(\Z_5,\phi)$-colorings, a contradiction which proves Claim \ref{claim10}.
Note that $G$ with its new lists of available colors cannot be a generalized multi-wheel because $n>3r$, as noted earlier.
\end{proof}

\begin{claim} \label{claim11}
$v_k$ is precolored.
\end{claim}

\begin{proof}
Suppose for contradiction that Claim \ref{claim11} is false.
The coloring of $v_1, v_2$ can be extended to $G$.
We give $v_k$ the color in that coloring.
This decreases each of $n,r$ by 1 and hence we obtain a contradiction to the minimality of $n$.
Note that, by Claim \ref{claim10}, the new $G$ cannot have a vertex with precisely four available colors joined to the three colored vertices.
\end{proof}

\begin{claim} \label{claim12}
If $u$ is a vertex in $int(C)$ joined to $v_i,v_j$, where $2\leq i < j \leq k$, then $v$ is also joined to each of $v_{i+1}, v_{i+2}, \ldots, v_{j-1}$.
\end{claim}

\begin{proof}
Suppose for contradiction that there exist $i',j'$ such that $i \leq i' \leq j' - 2 \leq j - 2$ and $u$ is joined to $v_{i'}, v_{j'}$, but not joined to any of $v_{i'+1}, v_{i'+2}, \ldots, v_{j'-1}$.
Let $C'$ be the cycle $u v_{i'} v_{i'+1} \cdots v_{j'} u$, and let $C''$ be the cycle $u v_{j'} v_{j'+1} \cdots v_k v_1 v_2 \cdots v_{i'} u$. 
We apply the induction hypothesis, first to the graph $G'' := C'' \cup int(C'')$ and then to the graph $G' := C' \cup int(C')$.  
This proves Claim \ref{claim12} unless $G'$ is a generalized multi-wheel.
If $G'$ is a generalized multi-wheel, then it is necessarily a multi-wheel, and then, by Lemma \ref{lemma1}, there exists $\alpha \in \Z_5$ such that all colorings of $v_{i'}, u, v_{j'}$ which cannot be extended to $G'$ satisfy $c(v_{j'}) - c(v_{i'}) = \alpha$.
So before we apply the induction hypothesis to $G''$ we add the edge $v_{i'}v_{j'}$ and we let $\phi(v_{i'}v_{j'}) = \alpha$. Apply the induction hypothesis to this graph and then to $G'$.
If $n'$ (respectively $r'$) is the number of non-precolored vertices (respectively non-precolored vertices with precisely three available colors) of $G''$, then it is easy to see that $n'/9 - r'/3 \geq n/9 - r/3$.
This contradiction proves Claim \ref{claim12}.
\end{proof}

Claim \ref{claim12} implies that $G$ does not contain an inserted wheel.

We may assume that 

\begin{claim} \label{claim13}
$k > 4$.
\end{claim}

\begin{proof}
For, if $k=3$, then we delete the edge $v_2v_3$. And if $k=4$, then we color $v_3$ and delete it and use induction after having modified the available lists of the neighbors of $v_3$ accordingly.
\end{proof}

We now split the proof up into the following two cases.

\begin{case}
$G$ does not contain a path $v_2u_1u_2 \cdots u_q v_k$ with the properties that
\begin{enumerate}[label=(\roman*)]
\item
each of $u_1, u_2, \ldots, u_q$ is a vertex in $int(C)$ joined to at least two vertices of $v_3, v_4, \ldots,$ $v_{k-1}$, and
\item
the cycle $v_1 v_2 u_1 u_2 \cdots u_q v_k v_1$ and its interior form a generalized multi-wheel.
\end{enumerate}
\end{case}

\begin{case}
$G$ contains a path $v_2u_1u_2 \cdots u_q v_k$ with the above-mentioned properties (i) and (ii).
\end{case}

Note, that (ii) is equivalent to the following statement: the cycle $v_1 v_2 u_1 u_2 \cdots u_q v_k v_1$ and its interior \textbf{contain} a generalized multi-wheel whose principal path is $v_kv_1v_2$ and all vertices on its outer cycle are on $v_1 v_2 u_1 u_2 \cdots u_q v_k v_1$.
This follows from Claim \ref{claim8} and the fact that if such a subgraph exists, and there is an edge $u_su_t$ for some $s < t$, then we may choose the path $v_2u_1 \cdots u_su_t \cdots u_q v_k$ in Case 2 instead of $v_2u_1u_2 \cdots u_q v_k$.

We first do Case 1. We shall prove that the number of $(\Z_5, \phi)$-colorings is not just at least $2^{n/9-r/3}$ as required in Theorem \ref{thm:main}, but at least $2^{(n+1)/9-r/3}$. 
This will be important in Case 2 which we shall reduce to Case 1 by deleting an appropriate vertex.

Let $R$ be the set of vertices in $int(C)$ which are joined to at least two vertices of the path $C - v_k - v_1 - v_2$.
By Claim \ref{claim12}, the union of the path $C - v_k - v_1 - v_2$ and $R$ and the edges from $R$ to $C$ form a broken wheel string which we will call $W$.

\vspace*{2mm}

\textbf{Subcase 1.1.}
No two consecutive blocks in $W$ are triangles.

We use Lemma \ref{lemma5} to color all the principal neighbors of the major vertices in $W$ in such a way that, regardless of how the major vertices in $W$ are colored, the coloring can be extended to $W$.
This means that we can apply induction to $G' = G - v_3 - v_4 - \cdots - v_{k-1}$.
Any $(\Z_5, \phi)$-coloring of $G'$ can be extended to $G$. 
By the induction hypothesis, the number of $(\Z_5, \phi)$-colorings of $G'$ is at least $2^{n'/9-r'/3}$ where $n' = n-k+3 = n-r$ and $r' = |R|$.
The assumption of Subcase 1.1 implies that $r' \leq (2r-1)/3$. 
Hence the number of $(\Z_5, \phi)$-colorings of $G'$ is at least $2^{(n+1)/9-r/3}$.

\vspace*{2mm}

\textbf{Subcase 1.2.}
Two consecutive blocks in $W$ are triangles.

Let $w_1,w_2$ be two vertices in $R$ each joined to precisely two consecutive blocks of $C$. 
That is, there is a natural number $i$ such that $W$ contains the blocks $w_1 v_{i-1} v_i w_1$ and $w_2 v_i v_{i+1} w_2$.
We now color successively $v_3$ and the cutvertices of $W$ with increasing indices until we color $v_{i-1}$. 
Whenever we color a cutvertex, we do it such that the corresponding block of $W$ can be colored regardless of how we color the major vertex.
This is possible by Lemma \ref{lemma4}.
There are even two possibilities for coloring such a cutvertex of $W$ whenever the preceding cutvertex is a neighbor of the cutvertex that is being colored.
Then we color successively $v_{k-1}$ and the cutvertices of $W$ with decreasing indices until we color $v_{i+1}$.
Again, there are even two possibilities for coloring such a cutvertex of $W$ whenever the preceding cutvertex is a neighbor of the cutvertex that is being colored. 
(Also there are two possibilities for coloring each of $v_3,v_{k-1}$.)
Finally we color $v_i$ and apply the induction hypothesis to $G - v_3 - v_4 - \cdots - v_{k-1}$.
Let $r'$ be the number of vertices of $R$ and let $n'$ be the number of uncolored vertices of $G - v_3 - v_4 - \cdots - v_{k-1}$. Then $n' = n-k+3= n-r$.

The number of colorings of the vertices of $W$ in the path $v_2v_3 \cdots v_{k-1}$ is at least $2^t$, where $t$ is the number of blocks of $W$ which are triangles.

For each of these there are at least $2^{n'/9-r'/3}$ $(\Z_5,\phi)$-colorings of $G - v_3 - v_4 - \cdots - v_{k-1}$, by the induction hypothesis.
Let $s$ be the number of blocks of $W$ which are not triangles. Then $r' = s + t$ and $r \geq 2s+t+1$. 
So the total number of colorings of $G$ is at least $2^{n'/9 - r'/3+t}$ which is greater than $2^{(n+1)/9 - r/3}$. 
This completes the proof in Case 1.

\vspace*{3mm}

We now do Case 2. Let $m$ be the smallest number such that $u_q$ is joined to $v_m$.
By Claim \ref{claim12}, $u_q$ is joined to $v_m, v_{m+1}, \ldots, v_k$ (and possibly also to $v_1$).
Again, we split up into two cases.

\vspace*{3mm}

\textbf{Subcase 2.1.}
$v_1$ is joined to $u_q$.

We select two colors $\alpha, \beta$ in $L_{v_{k-1}}$ distinct from $\tau_{v_k}(v_{k-1})$.
We add the colors $\tau_{v_{k-1}}(\alpha,u_q)$, $\tau_{v_{k-1}}(\beta,u_q)$ to $F_{u_q}$ and we delete the vertex $v_{k-1}$ from $G$.
Then we color $u_q$ and delete also $v_k$.
By the induction hypothesis, if the resulting graph $G'$ has at least one $(\Z_5, \phi)$-coloring, then it has at least $2^{(n-2)/9 - (r-1)/3}$ $(\Z_5, \phi)$-colorings. 
Each such coloring can be extended to $v_{k-1}$ and the proof is complete.
So assume that $G'$ has no $(\Z_5, \phi)$-coloring. 
By Theorem \ref{thm:3}, $G'$ contains a generalized multi-wheel. 
Clearly, $q \leq 2$. Furthermore, $G'$ has no chords $v_1v_i$ for $3 \leq i \leq k-2$. 
Hence either $q=1$ in which case $G$ is a wheel by Claims \ref{claim8} and \ref{claim12}, or else $q=2$ in which case $G'-v_{m+1}-v_{m+2}- \cdots - v_{k-2}$ is a wheel. 
But then $n - r \leq 2$ and there is nothing to prove.

\vspace*{2mm}

\textbf{Subcase 2.2.}
$v_1$ is not joined to $u_q$. Now $G$ has a vertex $w$ joined to $v_k, u_q, u_{l}$ for some $l < q$ by the definition of a generalized multi-wheel.
By Claim \ref{claim12}, $w$ is not joined to $v_2$.

If $m < k-2$, then we select two colors $\alpha, \beta$ in $L_{v_{k-1}}$ distinct from $\tau_{v_k}(v_{k-1})$. We add the colors $\tau_{v_{k-1}}(\alpha,u_q), \tau_{v_{k-1}}(\beta,u_q)$ to $F_{u_q}$ and we delete the vertices $v_{k-1}, v_{k-2}, \ldots, v_{m+1}$ from $G$. 
Then we use the induction hypothesis to obtain a contradiction because the resulting graph has a smaller $r$. So assume that $m=k-2$.

If $q=1$, then $w$ is joined to $v_2$, hence by Claim \ref{claim12} both of $w$ and $u_1$ are joined to all of $v_3, v_4, \ldots, v_{k-1}$ which is impossible. So assume that $q>1$.

If $u_{q-1}$ is joined to $v_{k-2}$, then we select two colors $\alpha, \beta$ in $L_{v_{k-1}}$ distinct from $\tau_{v_k}(v_{k-1})$. We add the colors $\tau_{v_{k-1}}(\alpha,u_q), \tau_{v_{k-1}}(\beta,u_q)$ to $F_{u_q}$ and we delete the vertex $v_{k-1}$ from $G$. 
The resulting graph $G'$ satisfies the assumption in Case 1. We explain why: If $G'$ contains a path $v_2w_1w_2 \cdots w_{q'}v_k$ such that each of $w_1,\ldots, w_{q'}$ is a vertex in the interior of $v_1v_2\cdots v_{k-2} u_qv_kv_1$, then $w_{q'}$ cannot be joined to any of $v_3, \ldots, v_{k-2}$, hence it is not joined to at least two vertices of $v_3, v_4, \ldots, v_{k-2}, u_q$.
Thus the conclusion follows by repeating the proof in Case 1. 
The reason we can repeat the proof in Case 1 is that $G'$ satisfies the analogue of Claim \ref{claim12} when $u_{q-1}$ is joined to $v_{k-2}$.
Therefore we may assume that $u_{q-1}$ is not joined to $v_{k-2}$. 

Let $i$ be the smallest number such that $u_{q-1}$ is joined to $v_i$, and let $j$ be the largest number $u_{q-1}$ is joined to $v_j$. Then $j < k-2$.
We select two colors $\alpha, \beta$ in $L_{v_{k-1}}$ distinct from $\tau_{v_k}(v_{k-1})$.
We add the colors $\tau_{v_{k-1}}(\alpha,u_q), \tau_{v_{k-1}}(\beta,u_q)$ to $F_{u_q}$ and we delete the vertex $v_{k-1}$.
The path $v_j u_{q-1} u_q$ divides the resulting graph into two graphs $G_1, G_2$, where $G_1$ contains $v_1$. 
Assume first that $G_2$ is a generalized multi-wheel.
If $G_1$ contains a generalized multi-wheel then $r \geq n/3$, so there is nothing to prove. 
If not, then we obtain a contradiction by applying the induction hypothesis to $G_1$. Let $n'$ denote the number of non-precolored vertices in $G_1$, and let $r'$ denote the number of vertices in $G_1$ with precisely three available colors. As $G_2$ is a generalized multi-wheel without chords, it is a multi-wheel and we have $n'/9-r'/3 \geq n/9-r/3$.
We use the fact that, by Lemma \ref{lemma1}, there exists $\alpha \in \Z_5$ such that all colorings of $v_j, u_{q-1}, u_q$ which cannot be extended to $G_2$ satisfy $c(u_q) - c(v_{j}) = \alpha$.
So before we apply the induction hypothesis to $G_1$ we add the edge $v_ju_q$ (directed towards $u_q$) and we let $\phi(v_ju_q) = \alpha$.
In this case the number of $(\Z_5, \phi)$-colorings of $G_1$ is greater than or equal to $2^{n/9-r/3}$, and any such coloring can be extended to $G_2$.

On the other hand, if $G_2$ is not a generalized multi-wheel, then we obtain a contradiction by applying the induction hypothesis first to $G_1$ and then to $G_2$.
We lose a multiplicative factor $2^{1/9}$ because of the deleted vertex $v_{k-1}$.
We make up for that before we apply induction to $G_1$ since we can delete one of the available colors of $u_{q-1}$ in at least five different ways.
In this way we gain a multiplicative factor $5/4$, and now the proof is complete, because $5/4 > 2^{1/9}$.
\end{proof}

\begin{cor}
Every planar simple graph with $n$ vertices has at least $2^{n/9}$ $\Z_5$-colorings. 
\end{cor}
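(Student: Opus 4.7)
The plan is to reduce the corollary to Theorem~\ref{thm:main} via triangulation. Given a simple planar graph $G$ with $n$ vertices and any $\phi: E(G) \to \Z_5$, assume $n \geq 3$ (for $n \leq 2$ the bound is trivial since every such graph has at least $5$ $(\Z_5,\phi)$-colorings). Embed $G$ in the plane and add edges as needed to obtain a plane near-triangulation $G'$ whose outer face is a triangle $v_1v_2v_3$; extend $\phi$ arbitrarily to $\phi': E(G') \to \Z_5$. Any $(\Z_5, \phi')$-coloring of $G'$ is a $(\Z_5, \phi)$-coloring of $G$, and distinct colorings of $G'$ remain distinct as colorings of $G$, so it suffices to produce at least $2^{n/9}$ $(\Z_5, \phi')$-colorings of $G'$.

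Next, I would set $F_v = \emptyset$ for every non-precolored vertex of $G'$, so that every such vertex has $|L_v| = 5$. In this setting, Theorem~\ref{thm:main} applies to $G'$ with $v_3, v_1, v_2$ precolored, $n' = n - 3$ non-precolored vertices, and $r = 0$. Because no vertex of $G'$ has precisely four available colors, the exceptional ``unless'' clause of Theorem~\ref{thm:main} cannot arise, so Theorem~\ref{thm:main} guarantees at least $2^{(n-3)/9}$ extensions of any proper precoloring of $(v_3, v_1, v_2)$ that has at least one extension.

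The final step is to exhibit enough extendable precolorings. By Theorem~A, $G'$ admits a $(\Z_5, \phi')$-coloring $c_0$. Since the defining condition $c(w) - c(u) \neq \phi'(uw)$ is translation invariant, the shift $c_\alpha := c_0 + \alpha$ is again a $(\Z_5, \phi')$-coloring for each $\alpha \in \Z_5$. The five colorings $c_0, c_1, c_2, c_3, c_4$ restrict to five pairwise distinct proper precolorings of $(v_3, v_1, v_2)$, each of which therefore extends to at least $2^{(n-3)/9}$ colorings of $G'$ by Theorem~\ref{thm:main}. Extensions of distinct precolorings are themselves distinct, so $G'$ admits at least $5 \cdot 2^{(n-3)/9} \geq 2^{n/9}$ $(\Z_5, \phi')$-colorings, using the inequality $5 \geq 2^{1/3}$. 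I do not anticipate any real obstacles in executing this plan; the only delicacies are ensuring that the exceptional case of Theorem~\ref{thm:main} is avoided (accomplished by taking all $F_v$ empty) and that sufficiently many precolorings genuinely extend (supplied by the translation trick combined with Theorem~A).
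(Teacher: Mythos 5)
Your proposal is correct and is exactly the derivation the paper intends (the corollary is stated without an explicit proof): triangulate, take all forbidden sets empty so $r=0$ and the exceptional case of Theorem~\ref{thm:main} cannot occur, use Theorem~A plus translation invariance to get extendable precolorings of the outer triangle, and absorb the lost factor $2^{3/9}$ into the factor $5$ from the distinct precolorings.
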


\end{document}